\newtheorem{definition}{Definition}
\newtheorem{proposition}{Proposition}
\newtheorem{problem}{Problem}
\newtheorem{assumption}{Assumption}
\newtheorem{remark}{Remark}
\newtheorem{theorem}{Theorem}
\newtheorem{example}{Example}
\newcommand{\set}[1]{\left\{#1\right\}}
\newcommand{\Real}{\mathbb R}
\newcommand{\Z}{\mathbb Z}
\newcommand{\M}{\mathcal M}
\newcommand{\T}{\mathcal T}
\newcommand{\Q}{\mathcal Q}
\newcommand{\A}{\mathcal A}
\renewcommand{\P}{\mathcal P}
\newcommand{\Y}{\mathcal Y}
\newcommand{\C}{\mathcal{C}}
\newcommand{\sv}{\,\vert\;}
\newcommand{\ra}{\rightarrow}
\newcommand{\I}{\mathcal{I}^{\infty}}
\newcommand{\pre}{\text{Pre}}
\begin{document}
\title{\LARGE\bf Invariance Control Synthesis for Switched Systems:\\ An Interval Analysis Approach
\thanks{Y. Li and J. Liu are with the Department of Applied Mathematics, University of Waterloo, Waterloo, Ontario, Canada. (e-mail: yinan.li@uwaterloo.ca, j.liu@uwaterloo.ca)}% <-this % stops a space
\thanks{Preliminary versions of this work \cite{li2016computing,li2016interval} are to be presented in the IEEE Multi-Conference on Systems and Control (MSC) 2016 and IEEE Conference on Decision and Control (CDC) 2016.}
}

\author{Yinan Li,~\IEEEmembership{Student Member,~IEEE,}
        Jun Liu,~\IEEEmembership{Member,~IEEE}}% <-this % stops a space

\maketitle

\begin{abstract}
This paper focuses on the invariance control problem for discrete-time switched nonlinear systems. The proposed approach computes controlled invariant sets in a finite number of iterations and directly yields a partition-based invariance controller using the information recorded during the computation. In contrast with Lyapunov-based control methods, this method does not require the subsystems to have common equilibrium points. Algorithms are developed for computing both outer and inner approximations of the maximal controlled invariant sets, which are represented as finite unions of intervals. The general convergence results of interval methods allow us to obtain arbitrarily precise approximations without any stability assumptions. In addition, invariant inner approximations can be computed provided that the switched system satisfies a robustly controlled invariance condition. Under the same condition, we also prove the existence of an invariance controller based on partitions of the state space. Our method is illustrated with three examples drawn from different applications and compared with existing work in the literature.
\end{abstract}

\begin{IEEEkeywords}
Invariance, control synthesis, switched systems, interval analysis, robustness.
\end{IEEEkeywords}

\IEEEpeerreviewmaketitle

\section{Introduction}
\IEEEPARstart{S}{witched} systems are dynamical systems consisting of multiple subsystems or modes. The mode to be activated at a specific time is determined by a discrete variable. A variety of real-world systems belong to this kind, e.g. electrical power converters \cite{FribourgBook13} and DISC engines \cite{RinehartDRK08}. Control synthesis for systems with complex dynamics or specifications, such as robot motion planning \cite{KressGazitFP09} and flight management \cite{FrazzoliDF05}, is usually simplified to switching control between different operating modes and motion primitives.

Invariance control concerns with the problem of finding a control law such that the solutions of a closed-loop system are restricted to a specified region in the state space for all future time. Practical stabilization and safety control, which are two important control specifications in various settings, are essentially in the scope of invariance control. In practical stabilization, system states have to be controlled and maintained in a sufficiently small region around a given set point. In safety control, the region to be rendered invariant through admissible control inputs is defined by some safe operating envelope. Therefore, invariance control plays an important role in multiple branches of control, such as constrained, robust, and optimal control \cite{Kerrigan00}.

In this paper, we propose an invariance control synthesis method for discrete-time switched nonlinear systems. Our approach does not assume that the subsystems are asymptotically stable or have common equilibrium points. The underlying idea is to compute controlled invariant sets using interval methods. More specifically, successive approximations of controlled invariant sets are computed and represented as unions of intervals. For any user-defined precision, such computation is guaranteed to terminate and converge to the true invariant sets as the chosen precision tends to zero. An invariance controller can be extracted once the computation of the controlled invariant set terminates.

\subsection{Related work}

\subsubsection{Computation of maximal controlled invariant sets}
A key objective of invariance control synthesis is to find the maximal controlled invariant set within a given region in the system state space. To determine if a set is controlled invariant for a discrete-time system, simply checking the system vector flow on the set boundary is insufficient \cite{Blanchini99}. As proposed in \cite{Bertsekas72}, the invariance control solution for general discrete-time dynamical systems relies on a fixed-point algorithm. Starting with a target set of states, this algorithm iteratively eliminates the states that violate the invariance condition until the maximal controlled invariant set. i.e., the fixed point, is achieved. This method has been applied to discrete-time linear \cite{Blanchini94,Kerrigan00} and switched linear systems \cite{BenSassiG12,Dehghan12,FribourgBook13}. Practical realization of the conceptual fixed-point algorithm, however, is still a challenging and open problem.

\emph{Linear systems:} Even for linear systems, computing exact maximal controlled invariant sets is nontrivial, primarily for the lack of finite termination in the fixed-point algorithm. Most of the research uses polyhedral sets because they naturally describe linear dynamics and physical limitations \cite{KolmanovskyG98,Kerrigan00}. Ellipsoids are also favored for their close relationship with linear problems and quadratic forms. Using these types of set representations, controlled invariant sets computation can be characterized as linear programming. To circumvent the difficulty of finite termination, approximations of invariant sets are sought instead. The work in \cite{RakovicKKM05} computes invariant outer approximations of the minimal robust invariant sets for linear systems with asymptotically stable dynamics. To obtain an invariant inner approximation of the maximal controlled invariant set of a controllable linear system, one way is to enforce some contractive property on the system dynamics around a compact and convex set containing the origin (called a C-set) \cite{Blanchini94}. An alternative is to compute the null-controllable sets \cite{GutmanC86}, i.e., the set of states that can be controlled to the origin in finite time. 
The recent work \cite{RunggerT16} computes outer and inner approximations of robust controlled invariant sets that do not necessarily contain the origin. For switched linear systems, the computation of maximal controlled invariant sets in \cite{DeSantisDB04} relies on computing inner approximations of the maximal controlled invariant sets of the subsystems, which is based on \cite{Blanchini94}. The computation of maximal dwell-time invariant sets for switching systems under dwell-time switching is considered in \cite{Dehghan12}. Invariance control of power electronic systems, which is modeled as switched linear affine systems, is also studied in \cite{FribourgBook13} using invariant inner-approximations of maximal controlled invariant sets.

\emph{Nonlinear systems:} To compute invariant sets for nonlinear systems, the difficulty not only lies in the problem of finite termination, but also in the computation of reachable sets under nonlinear mappings. Lyapunov functions provide a powerful set of tools for studying invariance control problems, since their sublevel sets naturally define positively invariant sets. Construction of Lyapunov functions for nonlinear systems, however, is a difficult problem. Sum-of-squares (SOS) techniques are used in \cite{WloszekFTSP03} to construct and maximize the level sets of controlled Lyapunov functions, which can be seen as inner approximations of the maximal controlled invariant sets. The techniques lead to nonconvex optimization problems, which are affected by optimization initial conditions. For polynomial systems with convex polyhedral invariant candidates given in prior, invariance control synthesis is simplified to a linear programming problem \cite{BenSassiG12}. In \cite{KordaHJ13}, the authors relax the convex condition and propose a method to outer approximate the maximal controlled sets using occupation measures. The resultant outer approximations, however, are not invariant and hence not suitable for invariance control synthesis.

\emph{Abstraction-based methods:} During computation of the maximal controlled invariant sets, such methods avoid handling system dynamics in an infinite state space  by performing fixed-point computation on finite abstractions of the original system. The finiteness of the abstractions guarantees that such computation terminates in finite time. The resultant fixed-point sets, called winning sets, are equivalent (or approximately equivalent) to the maximal controlled invarianet sets of the original system, if bisimilar models (or approximately bisimilar models) \cite{TabuadaBook09} are used as abstractions. Known systems that have bisimilar models are limitted to controllable linear systems \cite{Tabuada03} and approximately bisimilar models require incrementally stable dynamics \cite{GirardPT10,PolaGT08}. Without these assumptions, over-approximations \cite{LiuOTM13} or similar models \cite{ZamaniPMT12} can be used, but they are prone to generating empty winning sets during discrete synthesis, because spurious transitions are introduced. Therefore, a drawback of using abstraction-based methods is the incompleteness of abstractions as the representations of the original systems. The abstraction refinement method proposed in \cite{NilssonO14} can increase the possibility of finding winning sets, but still without completeness guarantee. 

\subsubsection{Interval methods for reachable sets computation}
Interval analysis, or interval computation, refers to the computational methods that use interval arithmetic with the aim to yield rigorous and reliable results. Such methods have been developed since the 1960s and successfully applied in solving different problems \cite{JaulinBook01}, including computing reachable sets for continuous-time systems \cite{ChenSA14} by way of validated numerical solutions to initial value problems for ordinary differential equations \cite{NedialkovJC99}. A major advantage of using interval methods for reachable sets computation is the flexibility to represent any compact set involved in the computation as unions of intervals. Moreover, set approximation error converges through iterative interval refinement technique. This technique, termed as branch and prune approach, is used to solve constraint-satisfaction problems \cite{Granvilliers01,RamdaniN11} and, more recently, to enclose set boundaries \cite{XueSE16}. It also applies in computing pre-images of nonlinear mappings. The corresponding algorithm is known as \emph{Set Inversion Via Interval Analysis} (SIVIA) \cite{JaulinBook01}. A toolbox implementing SIVIA using high-level numerical programming languages is available in \cite{HerreroGTDJ12}.

\subsection{Contributions}
This paper tackles the invariance control problem for discrete-time switched nonlinear systems. Compared with the aforementioned research, the main contributions of this paper are threefold. 

Firstly, using the interval refinement technique, we solve the invariance control problem by computing maximal controlled invariant sets of switched nonlinear systems directly. To the best of our knowledge, this is the first application of interval analysis in solving invariance control problems. With admissible modes recorded during the computation of controlled invariant sets, invariance controllers can be extracted as soon as the computation completes. 
Compared with abstraction-based methods, our method generates an invariance controller with completeness guarantee for switched nonlinear systems without the assumption that the subsystems have common equilibrium points. 
Moreover, the controlled invariant sets are adaptively partitioned according to both specifications and system dynamics, yielding lower computational complexity than abstraction-based methods with uniform grids. Guided by specifications, our approach is similar to the methods proposed in \cite{NilssonHC14,RunggerMT13}. While these works focus on controllable discrete-time linear systems, our results are derived and tested for the general switched nonlinear systems.

Secondly, we develop algorithms to compute outer and inner approximations of the maximal controlled invariant sets of switched nonlinear systems, which terminate in finite time. The outer approximation can be made as close to the exact maximal invariant sets as required, if a sufficiently small precision parameter is used. For computing inner approximations, we show that a robust invariance condition has to be satisfied in order to obtain invariant inner approximations within a finite number of iterations. This condition addresses the relationship between system robustness and the computation of maximal controlled invariant sets for the first time in the literature. Lastly, we prove that there exists an invariance controller taking values with respect to a partition of the controlled invariant set if the same robust invariance condition is satisfied. We believe this is the first result on partition-based invariance control problem for general switched nonlinear systems.

\subsection{Organization and Notation}
The rest of the paper is organized as follows. In Section II, we introduce switched systems and formulate the invariance control problem. In Section III, we characterize (maximal) controlled invariant sets in terms of set limits. Section IV presents our main results on the computation of outer and inner approximations of maximal controlled invariant sets. In Section V, we explicitly construct the invariance controller. In Section VI, we illustrate the efficiency and effectiveness of our method by three examples and compare them with existing work in the literature. 

\emph{\textbf{Notation}}: $\mathbb{Z}$, $\mathbb{R}$, $\mathbb{R}^n$ denote the set of all integral numbers, real numbers, and $n$-dimensional vectors, respectively; $\mathbb{Z}_{\geq 0}$, $\mathbb{R}_{\geq 0}$, and $\mathbb{R}^n_{\geq 0}$ are the corresponding sets that only have the non-negative members (component-wise non-negative for $n$-dimensional vectors); a compact set is called \emph{full} if it is equal to the closure of its interior; $\|\cdot\|$ denotes the infinity norm in $\Real^n$;
given $\varepsilon \in \Real_{\geq 0}$ and $x\in\Real^n$, define $\mathcal{B}_\varepsilon(x):=\{y\in \Real^n \,|\, \|y-x\|\le\varepsilon\}$; 
given $y\in\Real^n$ and $A\subset\Real^n$, the distance from $y$ to $A$ is defined by $\|y\|_A:=\inf_{x\in A}\|y-x\|$, the boundary and interior of $A$ is denoted by $\partial A$ and $\text{int}(A)$, respectively; given two sets $A,B\subset\Real^n$, $B\setminus A:=\set{x\in B\,\vert\, x\not\in A}$; the Minkowski sum of $A$ and $B$ is defined as $A\oplus B:=\{a+b\,\vert\; a\in A, b\in B\}$, and the Pontryagin difference is defined as $A\ominus B:=\{c\in\Real^n \,\vert\;c+b\in A, \forall b\in B\}$; the Hausdorff distance between $A$ and $B$ is given by $h(A,B):=\max\{d_h(A,B),d_h(B,A)\}$, where $d_h(A,B):=\sup_{a\in A}\|x\|_{B}$ denotes the distance from $A$ to $B$; an interval vector (or box) in $\Real^n$ is denoted by $[x]$, where $[x]:=[x_1]\times\cdots\times[x_n]\subset \Real^n$ and $[x_i]=[\underline{x}_i,\overline{x}_i]\subset \Real$ for $i=1,\cdots,n$; $\underline{x}_i$ represents the infimum of $[x_i]$ and $\overline{x}_i$ the supremum; we also write $[x]=[\underline{x},\overline{x}]$, where $\underline{x}=[\underline{x}_1,\cdots,\underline{x}_n]^T$ and $\overline{x}=[\overline{x}_1,\cdots,\overline{x}_n]^T$ the supremum; the width of the interval $[x]$ is defined as $w([x]):=\max_{1\leq i\leq n}\{\overline{x_i}-\underline{x_i}\}$; the set of all interval vectors in $\Real^n$ is denoted by $\mathbb{IR}^n$; given two sets of intervals $X$, $Y$, $X\subset Y$ denotes $\bigcup_{[x]\in X}[x]\subset \bigcup_{[y]\in Y}[y]$.

\section{Invariance Control Problem}
\subsection{Discrete-time switched nonlinear systems}
We consider discrete-time switched nonlinear systems of the form: 
\begin{equation}
x_{k+1}=f_{p_k}(x_k),\quad k\in \mathbb{Z}_{\ge 0},\label{eq:sw}
\end{equation}
where $x_k$, $x_{k+1}\in\Real^n$ denote the system states at time $k$ and $k+1$, respectively, and $p_k\in \M$ is the mode of the system at time $k$. It is assumed that the set of modes $\M$ is finite. The family of functions $\set{f_{p}}_{p\in\M}:\,\mathbb{R}^n \to \mathbb{R}^n$ are assumed to be continuous, and they determine the nonlinear dynamics for all subsystems.

Any infinite sequence in $\M$ defines a \emph{switching signal} for system (\ref{eq:sw}). We denote a particular switching signal by $\sigma:=\set{p_k}_{k=0}^\infty$, where $p_k\in \M$ for all $k\ge 0$. Given a switching signal $\sigma:=\set{p_k}_{k=0}^\infty$ and an initial state $x_0\in \Real^n$, the solution of system (\ref{eq:sw}) is the unique sequence $\set{x_k}_{k=0}^\infty$ in $\Real^n$ such that (\ref{eq:sw}) is satisfied.

\subsection{Problem formulation}
To state the invariance control problem, we define controlled invariant sets as follows.

\begin{definition} \label{def:invsetc} \em
A set $\Omega\subset\Real^n$ is said to be a \emph{controlled invariant set} for system (\ref{eq:sw}) if, for any initial state $x_0\in\Omega$, there exists a switching signal $\sigma$ such that the resulting solution $\set{x_k}_{k=0}^\infty$ of (\ref{eq:sw}) satisfies $x_k\in\Omega$ for all $k\ge 0$.
\end{definition}

A controlled invariant set of system (\ref{eq:sw}) with a single mode (i.e. there is no control variable) can be called a \emph{positively invariant set}, since only positive-time invariance is considered in this paper.

Given a set $\Omega\subset \Real^n$, the primary objective of invariance control is to determine a subset of $\Omega$, from which the system evolutions can never leave $\Omega$ if proper controls are applied. In \cite{Bertsekas72}, this problem is described as an infinite reachability problem. If $\Omega$ is controlled invariant itself, then it is termed as strongly reachable set. The necessary and sufficient condition for $\Omega$ being infinitely reachable is the existence of a strongly reachable set inside $\Omega$ \cite[Proposition 2]{Bertsekas72}. Among all possible strongly reachable subsets inside $\Omega$, it is of interest to determine the maximal one. Likewise, we have the following definition for switched systems.

\begin{definition} \label{def:maxinvc}\em 
  Given a set $\Omega\subseteq \Real^n$, the set $\I(\Omega)$ is said to be the \emph{maximal controlled invariant set} inside $\Omega$ for system (\ref{eq:sw}), if it is controlled invariant and contains all controlled invariant sets inside $\Omega$.
\end{definition}

To formulate the invariance control problem for discrete-time switched nonlinear systems, we limit our scope to the following type of controllers.

\begin{definition}\em
  A \emph{(memoryless) switching controller} of system (\ref{eq:sw}) is a function
  \begin{equation} \label{eq:ctlr}
    c:\,\Real^n\to 2^{\M}. 
  \end{equation}
A (state-dependent) switching signal $\sigma=\set{p_k}_{k=0}^\infty$ is said to conform to a switching controller $c$, if
  \begin{equation} 
    p_k\in c(x_k),\quad \forall k\ge 0,
  \end{equation}
where $\set{x_k}_{k=0}^\infty$ is the resulting solution of (\ref{eq:sw}). 
\end{definition}

In other words, a switching controller maps the current state into a set of modes that are allowed to apply. A switching signal chooses at each time a specific mode that is allowed by the switching controller.

\begin{definition}\em
A switching controller $c$ is said to be an \emph{invariance controller} for system (\ref{eq:sw}) with respect to a given set $\Omega \in\mathbb{R}^n$ if, for some initial state $x_0 \in \Omega$ and any switching signal $\sigma=\set{p_k}_{k=0}^\infty$ that conforms to $c$, the resulting solution $\set{x_k}_{k=0}^\infty$ of (\ref{eq:sw}) stays inside $\Omega$ for all future time, i.e., $x_k\in\Omega$ for all $k\ge 0$.
\end{definition}

Based on the above definitions, the main problem is stated as follows.

\begin{problem}[\textbf{Invariance Control Problem}] \label{pb:invctl}\em
Given a nonempty set $\Omega\subset \Real^n$ for a system (\ref{eq:sw}),
\begin{enumerate}[label=(\roman*)]
\item \label{itm:pb1} compute the maximal controlled invariant set inside $\Omega$;
\item \label{itm:pb2} synthesize an invariance controller for system (\ref{eq:sw}) with respect to $\Omega$.
\end{enumerate}
\end{problem}

\section{Characterization of controlled invariance} \label{sec:mcis}
In this section, we present some basic results in characterizing controlled invariant sets for switched nonlinear systems.

\begin{definition} \label{def:prec}\em
Given a set $\Omega\subset\Real^n$, the \emph{one-step backward reachable set} of $\Omega$ with respect to system (\ref{eq:sw}) is defined by
$$
\pre(\Omega):=\set{x\in\Real^n:\,\exists p\in\M\text{ such that }f_p(x)\in\Omega}. 
$$
\end{definition}

In other words, we have
\begin{equation}
  \label{eq:prec}
  \pre(\Omega)=\bigcup_{p\in\M}f^{-1}_p(\Omega),
\end{equation}
where $f^{-1}_p(\Omega):=\set{x\in\Real^n:\,f_p(x)\in\Omega}$, i.e., the pre-image of $\Omega$ under $f_p$.

By the definition above and continuity of the functions $\{f_p\}$, it is straightforward to check the following properties.
\begin{proposition}\label{prop:pre}\em
  Let $\Omega\subset \Real^n$, and $A\subset B\subset \Real^n$. Then
  \begin{enumerate}[label=(\roman*)]
  \item \label{itm:pre1}if $\Omega$ is closed, $\pre(\Omega)$ is closed;
  \item \label{itm:pre2}$\pre(A)\subset \pre(B)$;
  \end{enumerate}
\end{proposition}
\begin{proof}
  First of all, we show \ref{itm:pre1}. Let $p\in\M$, and $\{x_k\}_{k=0}^{\infty}$ be a convergent sequence in the set $f^{-1}_p(\Omega)$ with the limit $x^*$, i.e., $\lim_{k\to\infty}x_k=x^*$. By Definition \ref{def:prec}, there is a point $\tilde{x}_k=f_p(x_k)\in \Omega$ for all $k\in \Z_{\geq 0}$, resulting in a sequence $\{\tilde{x}_k\}_{k=0}^\infty$. Given $\Omega$ is a closed set, it follows that there is a convergent subsequence $\{\tilde{x}_{j_k}\}$ such that $\lim_{k\to\infty}\tilde{x}_{j_k}=\tilde{x}^*\in \Omega$. With the continuity of $f_p$, we have
$$\tilde{x}^*=\lim_{k\to\infty}\tilde{x}_{j_k}=\lim_{k\to\infty}f_p(x_{j_k})=\lim_{\substack{x\to x^*}}f_p(x)=f_p(x^*),$$
which means $x^*$ is inside $f_p^{-1}(\Omega)$, and thus $f_p^{-1}(\Omega)$ is closed. Since $\pre(\Omega)$ is a finite union of $f^{-1}_p(\Omega)$, $\pre(\Omega)$ is closed.

Considering \ref{itm:pre2}, for any $x\in \pre(A)$, there exists a mode $p\in \M$ such that $f_p(x)\in A\subset B$, and by definition, $x$ also belongs to $\pre(B)$. This proves $\pre(A)\subset \pre(B)$.
\end{proof}

\begin{proposition}\label{prop:invc}\em
A set $\Omega\subset\Real^n$ is controlled invariant for system (\ref{eq:sw}) if and only if $\Omega\subset \pre(\Omega)$.
\end{proposition}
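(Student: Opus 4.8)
The plan is to prove both implications directly from the definitions, since the statement is essentially a restatement of controlled invariance as a one-step reachability condition. The forward direction extracts a single step from the invariance property, and the reverse direction builds an infinite admissible trajectory by iterating the one-step condition. No continuity or topological facts are needed here; this is a purely combinatorial/inductive argument.

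For the forward direction ($\Rightarrow$), I would assume $\Omega$ is controlled invariant and fix an arbitrary $x\in\Omega$. Taking $x_0=x$ as the initial state, Definition \ref{def:invsetc} supplies a switching signal $\sigma=\set{p_k}_{k=0}^\infty$ whose solution stays in $\Omega$; in particular $x_1=f_{p_0}(x_0)\in\Omega$. Choosing the mode $p=p_0$ then witnesses $x\in\pre(\Omega)$ via Definition \ref{def:prec}. Since $x\in\Omega$ was arbitrary, $\Omega\subset\pre(\Omega)$.

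For the reverse direction ($\Leftarrow$), I would assume $\Omega\subset\pre(\Omega)$ and construct the required switching signal recursively. Starting from any $x_0\in\Omega$, the hypothesis gives $x_0\in\pre(\Omega)$, so there is a mode $p_0\in\M$ with $x_1:=f_{p_0}(x_0)\in\Omega$. Inductively, having produced $x_k\in\Omega$, the inclusion $x_k\in\Omega\subset\pre(\Omega)$ yields a mode $p_k\in\M$ with $x_{k+1}:=f_{p_k}(x_k)\in\Omega$. This defines a switching signal $\sigma=\set{p_k}_{k=0}^\infty$ whose corresponding solution satisfies $x_k\in\Omega$ for all $k\ge 0$, so $\Omega$ is controlled invariant by Definition \ref{def:invsetc}.

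The only point requiring care, and the nearest thing to an obstacle, is the infinite recursive choice of modes in the reverse direction: at each step one selects \emph{some} admissible $p_k$, and I would note that this is unproblematic because $\M$ is finite (so a canonical choice, e.g. the least index achieving the condition, can be fixed), making the construction fully explicit and avoiding any appeal to choice. The remainder is a direct unwinding of Definitions \ref{def:invsetc} and \ref{def:prec}.
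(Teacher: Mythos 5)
Your proof is correct and follows essentially the same route as the paper's: the forward direction extracts the first step of an invariant trajectory, and the reverse direction builds a switching signal by iterating the one-step condition (which the paper compresses into ``continuing this indefinitely''). Your added remark on making the mode selection canonical via finiteness of $\M$ is a careful refinement of, not a departure from, the paper's argument.
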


\begin{proof}
If $\Omega\subset \pre(\Omega)$, then, for any $x_0\in\Omega\subset \pre(\Omega)$, there exists $p_0\in\M$ such that $f_p(x_0)\in\Omega$. Continuing this indefinitely will give a switching signal $\set{p_k}_{k=0}^\infty$ such that the resulting solution $\set{x_n}_{n=k}^\infty$ of (\ref{eq:sw}) satisfies $x_n\in\Omega$ for all $n\ge k$. Hence $\Omega$ is controlled invariant.

If $\Omega$ is controlled invariant, by definition, for any $x_0\in\Omega$, there exists a switching signal $\sigma$ such that the resulting solution $\set{x_k}_{k=0}^\infty$ of (\ref{eq:sw}) satisfies $x_k\in\Omega$ for all $k\ge 0$. In particular, $x_1=f_{p_0}(x_0)\in\Omega$. This shows $x_0\in \pre(\Omega)$. Hence $\Omega\subset \pre(\Omega)$.
\end{proof}

To characterize maximal invariant sets, we define a mapping $I:X\to X$ as follows:
\begin{equation}
  \label{eq:mapi}
  I(X)=\pre(X)\cap X.
\end{equation}

Given a set $\Omega\subset \Real^n$, let $I^j$ ($j\in \Z_{>0}$) denote the $j$-times repeated compositions of the mapping $I$, then $I^j(\Omega)$ is the set of states from which the solutions of system (\ref{eq:sw}) can stay in $\Omega$ for $j$ steps of time. Letting $j=\infty$ gives the limit set $\lim_{j\to \infty}I^j(\Omega)$. Presumably, this limit set represents the maximal controlled invariant set inside $\Omega$. The following proposition formalizes this result for system (\ref{eq:sw}). A similar characterization of maximal controlled invariant set for non-switched systems can be found in \cite{Bertsekas72}.

\begin{proposition}\label{prop:conv}\em
Suppose that $\Omega\subset \Real^n$ is compact. Then 
  \begin{enumerate}[label=(\roman*)]
   \item \label{itm:fp1}$\I(\Omega)=\lim_{n\to\infty}I^n(\Omega)$;
   \item \label{itm:fp2}$\I(\Omega)$ is compact;
   \item \label{itm:fp3}$\I(\Omega) = I(\I(\Omega))$.
  \end{enumerate}
\end{proposition}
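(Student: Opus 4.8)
The plan is to realize the set limit as a nested intersection and then verify the two defining properties of a maximal controlled invariant set. First I would note that since $I(X)=\pre(X)\cap X\subset X$ for every $X$, the iterates form a decreasing chain $\Omega\supset I(\Omega)\supset I^2(\Omega)\supset\cdots$. Because $\Omega$ is compact (hence closed), Proposition~\ref{prop:pre}\ref{itm:pre1} gives that $\pre(\Omega)$ is closed, so $I(\Omega)$ is a closed subset of the compact set $\Omega$ and therefore compact; an easy induction makes every $I^n(\Omega)$ compact. For a nested decreasing sequence of compact sets the set limit exists and coincides with the intersection, so I would set $K:=\lim_{n\to\infty}I^n(\Omega)=\bigcap_{n\ge 0}I^n(\Omega)$. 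As a closed subset of the compact set $\Omega$, $K$ is itself compact, which already disposes of part~\ref{itm:fp2} once $K$ is identified with $\I(\Omega)$.

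The core of the argument is to show that $K$ is controlled invariant, i.e. (by Proposition~\ref{prop:invc}) that $K\subset\pre(K)$. Fix $x\in K$. Since $x\in I^{n+1}(\Omega)=\pre(I^n(\Omega))\cap I^n(\Omega)$ for every $n$, for each $n$ there is a mode $p_n\in\M$ with $f_{p_n}(x)\in I^n(\Omega)$. This is exactly where finiteness of $\M$ is essential: by the pigeonhole principle some single mode $p^*$ satisfies $f_{p^*}(x)\in I^n(\Omega)$ for infinitely many $n$, and since the iterates are decreasing this forces $f_{p^*}(x)\in I^m(\Omega)$ for every $m$, i.e. $f_{p^*}(x)\in K$. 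Hence $x\in\pre(K)$. I expect this compactness/pigeonhole step to be the main obstacle: a priori one only knows that from $x$ the system can remain in $\Omega$ over each finite horizon, and the nontrivial point is to upgrade the resulting sequence of finite-horizon choices into a single infinite-horizon trajectory staying in $K$.

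Next I would establish maximality. Let $C\subset\Omega$ be any controlled invariant set; I claim $C\subset I^n(\Omega)$ for all $n$, by induction. The base case is $C\subset\Omega=I^0(\Omega)$. Assuming $C\subset I^n(\Omega)$, Proposition~\ref{prop:invc} gives $C\subset\pre(C)$, and monotonicity (Proposition~\ref{prop:pre}\ref{itm:pre2}) yields $\pre(C)\subset\pre(I^n(\Omega))$; combining with $C\subset I^n(\Omega)$ gives $C\subset\pre(I^n(\Omega))\cap I^n(\Omega)=I^{n+1}(\Omega)$. Thus $C\subset\bigcap_{n}I^n(\Omega)=K$. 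Together with the previous paragraph, $K$ is controlled invariant and contains every controlled invariant subset of $\Omega$, so by Definition~\ref{def:maxinvc} we conclude $K=\I(\Omega)$, which proves part~\ref{itm:fp1} and, via compactness of $K$, part~\ref{itm:fp2}.

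Finally, part~\ref{itm:fp3} should be immediate from what precedes: controlled invariance of $\I(\Omega)$ means $\I(\Omega)\subset\pre(\I(\Omega))$, whence $I(\I(\Omega))=\pre(\I(\Omega))\cap\I(\Omega)=\I(\Omega)$.
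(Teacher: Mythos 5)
Your proposal is correct and takes essentially the same route as the paper's proof: realize the limit as the nested intersection of the closed, decreasing iterates $I^n(\Omega)$, use finiteness of $\M$ (the constant-subsequence/pigeonhole argument) to show that the limit set maps into itself under some single mode and hence is controlled invariant, and obtain maximality by the same induction with monotonicity of $\pre$. The only cosmetic differences are that you run the maximality induction over an arbitrary controlled invariant $C\subset\Omega$ rather than over $\I(\Omega)$ itself, and that you deduce part (iii) directly from Proposition~\ref{prop:invc}, neither of which changes the substance.
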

\begin{proof}
\ref{itm:fp1} First, it is straightforward to check that $I^n(\Omega)$ is a monotonically decreasing sequence of sets in the sense that $I^{n+1}(\Omega)\subset I^{n}(\Omega)$ for all $n\ge 1$. By Proposition \ref{prop:pre}, $I^n(\Omega)$ is a closed set for all $n\ge 1$. Thus the set limit of $I^n(\Omega)$ exists and is given by $\bigcap_{n=1}^{\infty}I^n(\Omega)$. 
Second, it is easy to check that $\I(\Omega)\subset I^n(\Omega)$ for all $n\ge 1$ by induction, using the fact that $\I(\Omega)$, if nonempty, is a controlled invariant subset of $\Omega$ and Proposition \ref{prop:invc}. Hence, $\I(\Omega)\subset \bigcap_{n=1}^{\infty}I^n(\Omega)=\lim_{n\ra\infty}I^n(\Omega)$.

Third, we claim that $\lim_{n\ra\infty}I^n(\Omega)\subset \I(\Omega)$. If $\lim_{n\ra\infty}I^n(\Omega)$ is empty, this trivially holds. If not, pick any $x_0\in \lim_{n\ra\infty}I^n(\Omega)$. Then $x_0\subset I^n(\Omega)$ for all $n\ge 1$. It follows that there exists $p_n\in \M$ such that $f_{p_n}(x_0)\in I^{n-1}(\Omega)$ for all $n\ge 1$. Since $\M$ is finite, the sequence $\set{p_n}_{n=1}^\infty\subset \M$ must admit a constant subsequence. In other words, there exists $p\in\M$ such that $f_p(x_0)\subset I^{n-1}(\Omega)$ for infinitely many $n\ge 1$. By monotonicity of the sequence $I^n(\Omega)$, this implies $f_p(x_0)\in \bigcap_{n=1}^{\infty}I^n(\Omega)=\lim_{n\ra\infty}I^n(\Omega)$. Hence $\lim_{n\ra\infty}I^n(\Omega)$ is a controlled invariant subset of $\Omega$. By Definition \ref{def:maxinvc}, $\lim_{n\ra\infty}I^n(\Omega)\subset \I(\Omega)$. Hence, $\I(\Omega)=\lim_{n\ra\infty}I^n(\Omega)$.

\ref{itm:fp2} By Definition \ref{def:maxinvc}, $\I(\Omega)\subset \Omega$, and thus $\I(\Omega)$ is bounded. By Proposition \ref{prop:pre} \ref{itm:pre1}, $I(X)=\pre(X)\cap X$ is closed if $X$ is closed, which implies $I^j(\Omega)$ is closed for all $j\in \Z_{\geq 0}$. By \ref{itm:fp1},  $\I(\Omega)=\bigcap_{n=1}^{\infty}I^n(\Omega)$ is closed, because infinite intersection of closed set is still closed. Hence, $\I(\Omega)$ is closed and bounded, which means it is compact.

\ref{itm:fp3} By (\ref{eq:mapi}), $I(\I(\Omega))=\pre(\I(\Omega))\cap \I(\Omega)$, and thus $I(\I(\Omega)) \subset \I(\Omega)$. Assume $x\in \I(\Omega)$. Then $x\in\pre(\I(\Omega))$, otherwise there is no mode $p\in \M$ such that $f_p(x)\in \I(\Omega)$. Hence, $\I(\Omega)\subset I(\I(\Omega))$, which completes the proof.
\end{proof}

Based on Proposition \ref{prop:conv} \ref{itm:fp3}, $\I(\Omega)$ is a fixed point of $I$, and the following well-known fixed-point algorithm can be used to compute maximal controlled invariant sets for system (\ref{eq:sw}).

\begin{algorithm}[H]
  \centering
  \caption{Computation of $\I(\Omega)$}
  \label{alg:mcis0}
  \begin{algorithmic}[1]
    \Require $\Omega$
    \State $\widetilde{X}=\Omega, X= \varnothing$
    \While{$\widetilde{X} \neq X$}
    \State $X=\widetilde{X}, \widetilde{X}=\pre(X) \cap X$
    \EndWhile
    \State \Return $X$
  \end{algorithmic}
\end{algorithm}

In practice, there are two major challenges in computing $\I(\Omega)$. First, it is very difficult, if not impossible, to exactly compute the backward reachable set $\pre(X)$ because of the nonlinear mappings $\set{f_p}_{p\in \M}$. Even for linear systems with polyhedral or ellipsoidal constraints, set operations such as Pontryagin difference are likely to introduce irregular shapes, which makes computation of accurate reachable sets impossible. Therefore, one has to seek approximations of $\pre(X)$. 

Second, Algorithm \ref{alg:mcis0} may not terminate in a finite number of steps. We call $\I(\Omega)$ \emph{finitely determined} if Algorithm \ref{alg:mcis0} stops in a finite number of steps. 
Only for some special cases like unconstrained controllable LTI systems and finite state machines with bounded constraints, $\I(\Omega)$ can be finitely determined \cite{VidalSLS00}. 
We will address these two challenges in the next section as the main results of our paper.

\section{Computation of controlled invariant sets}

This section provides the technical details on computing the controlled invariant sets.

\subsection{Interval approximation of backward reachable sets}
Computing the one-step backward reachable sets for system (\ref{eq:sw}) relies on the computation of the pre-image $f^{-1}_p(\Omega)$ for each $p\in\M$. In this paper, we use interval computation to approximate these pre-images, not only for its simplicity, but also for its convergence guarantee under mild assumptions. Following the branch-and-prune approach \cite{JaulinBook01,Granvilliers01}, we present Algorithm \ref{alg:cpre} for computing the image of the nonlinear mapping $I$. 

Fundamental to computing interval images is the concept of convergent inclusion functions.

\begin{definition}\em \cite{JaulinBook01} \label{def:intf}
Consider a function $f:\,\Real^n\rightarrow\Real^m$. The corresponding interval function $[f]:\,\mathbb{IR}^n\rightarrow\mathbb{IR}^m$ is called a \emph{convergent inclusion function} of $f$ if the following two conditions hold:
\begin{enumerate}[label=(\roman*)]
\item \label{itm:intf1}$f([x])\subset [f]([x])$ for all $[x]\in \mathbb{IR}^n$; 
\item \label{itm:intf2}$\lim_{w([x])\to 0}w([f]([x]))=0$.
\end{enumerate}
\end{definition}

For a vector-valued function $f$, its convergent inclusion function is not unique. Methods vary in obtaining such inclusion functions. One can compute the infimum and supremum of $f([x])$ by performing optimizations on the interval $[x]$ if they are trivial. One straightforward inclusion function is called the natural inclusion function, which is the result of replacing the variables and operations in a function by their interval counterparts. The natural inclusion function is known to have at least a linear convergence rate. For higher precision, centered-form and mean-value form can also be used \cite{JaulinBook01}.

\begin{algorithm}[htbp]
  \caption{Approximation of $I(\Omega)$}
  \label{alg:cpre}
  \begin{algorithmic}[1]
    \Procedure{CPre}{$[f_p]_{p\in \M},\Omega,\varepsilon$}
    \State $\underline{X}\leftarrow\varnothing,\Delta X\leftarrow\varnothing, X_c\leftarrow\varnothing$
    \State $\C\leftarrow \varnothing, List\leftarrow\{[\Omega]\}$
    
    \While{$List\neq \varnothing$}
    \State $[x]\leftarrow Pop(List)$
    \State $C\leftarrow \varnothing$
    \If{$[f_p]([x])\cap \Omega=\varnothing$ for all $p\in\M$}
    \State $X_c\leftarrow X_c\cup [x]$

    \ElsIf{$[f_p]([x]) \subset \Omega$ for some $p\in\M$}
    \State $\underline{X}\leftarrow \underline{X}\cup [x]$;
    \State $C\leftarrow C\cup p$

    \Else

    \If{$w([x])<\varepsilon$}
    \State $\Delta X\leftarrow \Delta X\cup [x]$;
    \Else
    \State $\{L[x],R[x] \}=Bisect([x])$
    \State $Push(List, \set{L[x],R[x]}$
    \EndIf

    \EndIf
    
    \If{$C\neq \varnothing$}
    \State $\C\leftarrow \C\cup C$
    \EndIf

    \EndWhile

    \Return $\underline{X}, \Delta X, X_c, \C$
    \EndProcedure
  \end{algorithmic}
\end{algorithm}

The algorithm takes as input a compact set $\Omega$, which is assumed to be an interval or a finite union of intervals. This is without loss of generality, because any compact set can be arbitrarily approximated by a union of intervals. At each iteration, Algorithm \ref{alg:cpre} checks if the image $[f_p]([x])$ of a particular box $[x]$ is contained in $\Omega$ for some $p\in\M$, or completely outside of $\Omega$ for any $p\in\M$. If neither, and the box size is greater than $\varepsilon$, then $[x]$ is deemed to be undetermined and divided into two subintervals $L[x]$ and $R[x]$ by bisection, which are given by
\begin{equation*}
  \begin{aligned}
    L[x]&=[\underline{x}_1,\overline{x}_1] \times\cdots\times [\underline{x}_j,(\underline{x}_j+\overline{x}_j)/2] \times\cdots\times [\underline{x}_n,\overline{x}_n],\\
    R[x]&=[\underline{x}_1,\overline{x}_1] \times\cdots\times [(\underline{x}_j+\overline{x}_j)/2,\overline{x}_j] \times\cdots\times [\underline{x}_n,\overline{x}_n],
  \end{aligned}
\end{equation*}
where $j$ is the dimension in which the box $x$ attains its width. A box will not go through subdivision once its size is less than $\varepsilon$, which is used to control the smallest size of the intervals, and thus control the precision of the set $\pre(\Omega)$. The list of intervals that entirely belong to $\pre(\Omega)$ is denoted by $\underline{X}$ while the list of those that are mapped outside of $\Omega$ by $[f_p]$ for any $p\in\M$ is denoted by $X_c$. The intervals that are partly inside $\pre(\Omega)$, i.e., undetermined intervals, are collected in $\Delta X$. The list $\C$ is a list of modes that render the system (\ref{eq:sw}) invariant within $\Omega$ for the corresponding intervals in $\underline{X}$. 

To analyze the precision of pre-image computation with a given precision parameter $\varepsilon$, we rely on the following assumption.

\begin{assumption}\em \label{asp:f}
  Let $\Omega\subset\Real^n$. There exists a $\rho_1>0$ such that system (\ref{eq:sw}) satisfies the following condition for all $p\in\M$:
  \begin{equation}\label{eq:lipschitz}
      \|f_p(x)-f_p(y)\|\leq \rho_1\|x-y\|,\quad \forall x,y\in\Omega.
  \end{equation}
\end{assumption}

This is essentially a local compactness assumption on $f_p$ for all $p\in\M$. If $f_p$ is continuously differentiable on $\Omega$ for all $p\in\M$, and $\Omega$ is compact, then $\rho_1=\max_{x\in\Omega,p\in\M}\|J_xf_p\|$, where $J_x$ is the Jacobian matrix at $x$. Based on (\ref{eq:lipschitz}), it is possible to choose an inclusion function $[f_p]$ for each $f_p$ (e.g., mean-value form \cite{JaulinBook01}) such that 
  \begin{equation}\label{eq:lipschitz2}
w([f_p]([x]))\leq \rho_1 w([x]),\quad \forall [x]\in\mathbb{IR}^n. 
  \end{equation}

\begin{theorem}\em \label{prop:cpre}
  Suppose that $\Omega$ and $X=I(\Omega)$ are compact and full. Let $\overline{X}:=\underline{X}\cup \Delta X$, where $\underline{X}$ and $\Delta X$ are outputs of Algorithm \ref{alg:cpre}. If Assumption \ref{asp:f} holds on $\Omega$, then
  \begin{equation*}
    I(\Omega\ominus\mathcal{B}_{\rho_1\varepsilon})\subset \underline{X}\subset X\subset \overline{X}\subset I(\Omega\oplus\mathcal{B}_{\rho_1\varepsilon}).
  \end{equation*}
\end{theorem}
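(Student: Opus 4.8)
The plan is to prove the chain as four separate inclusions, exploiting the structural fact that the leaf boxes returned by Algorithm~\ref{alg:cpre} partition $\Omega$: since the working list is initialized with $\Omega$ and bisection never produces a box outside its parent, the three output families $\underline{X}$, $\Delta X$, $X_c$ cover $\Omega$ and every individual box lies in $\Omega$. The single analytic tool used throughout is the width estimate $w([f_p]([x]))\le\rho_1 w([x])$ from (\ref{eq:lipschitz2}); because $\|\cdot\|$ is the infinity norm, this says exactly that any two points of the image box $[f_p]([x])$ are within distance $\rho_1 w([x])$ of one another, i.e.\ the diameter of a box equals its width.

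First I would dispatch the two inner inclusions $\underline{X}\subset X$ and $X\subset\overline{X}$, which need only the basic inclusion property $f_p([x])\subset[f_p]([x])$. For $[x]\in\underline{X}$ the acceptance test $[f_p]([x])\subset\Omega$ yields $f_p([x])\subset\Omega$, so every point of $[x]$ lies in $\pre(\Omega)\cap\Omega=X$, giving $\underline{X}\subset X$. For $X\subset\overline{X}$, a box placed into $X_c$ satisfies $[f_p]([x])\cap\Omega=\varnothing$ for all $p\in\M$, hence none of its points lies in $\pre(\Omega)$; since the leaf boxes cover $\Omega\supset X$ and $X$ therefore meets no $X_c$-box, every point of $X$ sits in a box of $\underline{X}\cup\Delta X=\overline{X}$.

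The two outer inclusions are where the precision parameter $\varepsilon$ enters, and I expect this to be the crux. For $\I(\Omega\ominus\mathcal{B}_{\rho_1\varepsilon})\subset\underline{X}$, take $x$ in this set together with the mode $p$ satisfying $f_p(x)\in\Omega\ominus\mathcal{B}_{\rho_1\varepsilon}$, so that $\mathcal{B}_{\rho_1\varepsilon}(f_p(x))\subset\Omega$. The point $x$ lies in some leaf box $[x']$; it cannot be an $X_c$-box since $f_p(x)\in\Omega$. If $[x']\in\Delta X$ then $w([x'])<\varepsilon$, whence $w([f_p]([x']))<\rho_1\varepsilon$, and because $f_p(x)\in[f_p]([x'])$ every point of $[f_p]([x'])$ lies within $\rho_1\varepsilon$ of $f_p(x)$, so $[f_p]([x'])\subset\mathcal{B}_{\rho_1\varepsilon}(f_p(x))\subset\Omega$; but this forces $[x']$ into $\underline{X}$, a contradiction, so $[x']\in\underline{X}$. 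The dilation inclusion $\overline{X}\subset\I(\Omega\oplus\mathcal{B}_{\rho_1\varepsilon})$ is the mirror argument: a $\Delta X$-box is undetermined, so some image $[f_p]([x'])$ meets $\Omega$ at a point $z$, and the same width bound places $f_p(x)$ within $\rho_1\varepsilon$ of $z\in\Omega$, i.e.\ $f_p(x)\in\Omega\oplus\mathcal{B}_{\rho_1\varepsilon}$; combined with $\overline{X}\subset\Omega\subset\Omega\oplus\mathcal{B}_{\rho_1\varepsilon}$ this gives $x\in\pre(\Omega\oplus\mathcal{B}_{\rho_1\varepsilon})\cap(\Omega\oplus\mathcal{B}_{\rho_1\varepsilon})$.

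The \textbf{main obstacle} is the bookkeeping in these last two inclusions: one must faithfully translate ``box of width below $\varepsilon$ that escaped both classification tests'' into a statement about the Pontryagin difference (inner bound) and the Minkowski sum (outer bound), and this translation works precisely because the infinity norm identifies box width with box diameter. The fullness and compactness hypotheses on $\Omega$ and $X=I(\Omega)$ are what ensure the eroded and dilated sets are well behaved and that the union-of-boxes inclusions $\subset$ (as defined in the notation) correctly reflect the underlying set inclusions along boundaries.
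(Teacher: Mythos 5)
Your proposal is correct and follows essentially the same route as the paper: both arguments rest on the algorithm's classification structure ($\underline{X}\subset X\subset\overline{X}\subset\Omega$, with $w([x])<\varepsilon$ for every $[x]\in\Delta X$) combined with the width bound $w([f_p]([x]))\le\rho_1 w([x])<\rho_1\varepsilon$, translated through the definitions of $\oplus$ and $\ominus$. The only cosmetic difference is in the erosion inclusion $I(\Omega\ominus\mathcal{B}_{\rho_1\varepsilon})\subset\underline{X}$, where you derive the contradiction at the level of the algorithm's acceptance test (showing $[f_p]([x'])\subset\Omega$ would force $[x']\in\underline{X}$), while the paper derives it at the level of the point $f_p(x)$ (showing it would be within $\rho_1\varepsilon$ of $\partial\Omega$ and hence outside $\Omega\ominus\mathcal{B}_{\rho_1\varepsilon}$) --- the same idea read in opposite directions.
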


\begin{proof}
  It is clear from Algorithm \ref{alg:cpre} that $\underline{X}\subset X\subset \overline{X}\subset \Omega$. This algorithm only stops when the widths of the undetermined intervals are less than the given accuracy parameter, which means that for all $[x]\in \Delta X$, $w([x])<\varepsilon$.

By Assumption \ref{asp:f} and (\ref{eq:lipschitz2}), we have $w([f_p]([x]))\leq \rho_1 w([x])<\rho_1\varepsilon$. For any $[x]\in \Delta X$, there exists a $p\in\M$ such that $[f_p]([x])\cap \Omega\neq \varnothing$. By the definition of Minkowski sum, it follows that $[f_p]([x])\subset (\Omega\oplus\mathcal{B}_{\rho_1\varepsilon})$. Also, for any $[x]\in \underline{X}$, we can find a $p\in \M$ such that $[f_p]([x])\subset \Omega$. Hence, $\overline{X}=(\underline{X}\cup \Delta X) \subset (\pre(\Omega\oplus\mathcal{B}_{\rho_1\varepsilon})\cap\Omega)\subset I(\Omega\oplus\mathcal{B}_{\rho_1\varepsilon})$.

To prove the second half, we aim to show that for all $y\in (\Omega\ominus\mathcal{B}_{\rho_1\varepsilon})$, there exists a $ p\in\M$ such that $y\in\bigcup_{[x]\in\underline{X}}f_p([x])$. Suppose this is not true. Then it is fair to say that there exists a $p\in\M$ such that $y\in \bigcup_{[x]\in \Delta X}f_p([x])$, otherwise $y\in \bigcup_{p\in\M} f_p(X_c)$, which means $y\notin \Omega$. It follows that there exists $y'\in\partial \Omega$ such that $\|y'-y\|=\gamma<\rho_1\varepsilon$. Then for any given $\delta>0$ satisfying $\delta+\gamma<\rho_1\varepsilon$, there exists a point $y''\in (\Real^n\setminus \Omega)$ such that $\|y''-y'\|\leq \delta$. Thus, $\|y''-y\|\leq \|y''-y'\|+\|y'-y\|\leq \delta+\gamma<\rho_1\varepsilon$. This implies there exists a point $z\in\mathcal{B}_{\rho_1\varepsilon}$ such that $z+y=y''\notin \Omega$, which means $y\notin (\Omega\ominus \mathcal{B}_{\rho_1\varepsilon})$. This is a contradictory with the hypothesis. Hence, $\underline{X}\supset I(\Omega\ominus\mathcal{B}_{\rho_1\varepsilon})$. 
\end{proof}

Furthermore, the sets $\underline{X}$ and $\overline{X}$ converge to the exact set $X$ if $f_p$ (for all $p\in\M$) is invertible on $\Omega$. This implies that the exact set $X$ can be approximated with arbitrary precision. Before presenting the convergence result, we provide the following proposition based on the basic properties of the Pontryagin difference \cite{KolmanovskyG98}.

\begin{proposition} \label{prop:hdis}\em
Let $\Omega \subset \Real^n$ be a compact set, and $\rho>0$. Assume $\Omega\ominus\mathcal{B}_{\rho}\neq \varnothing$. Then $h(\Omega,\Omega\ominus\mathcal{B}_{\rho})\geq \rho$. Furthermore,
\begin{enumerate}[label=(\roman*)]
\item \label{itm:h1}If $\Omega$ is an interval, then $h(\Omega,\Omega\ominus\mathcal{B}_{\rho})=\rho$; 
\item \label{itm:h2}if $\Omega$ is full, then $\lim_{\rho\to 0}h(\Omega,\Omega\ominus\mathcal{B}_{\rho})=0$.
\end{enumerate}
\end{proposition}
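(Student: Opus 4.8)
The plan is to first rewrite the erosion in a more workable form: by the definition of the Pontryagin difference, $\Omega\ominus\mathcal{B}_\rho=\set{y\in\Real^n:\,\mathcal{B}_\rho(y)\subseteq\Omega}$, i.e., the set of points whose closed $\rho$-ball is contained in $\Omega$. Since $0\in\mathcal{B}_\rho$, this immediately gives $\Omega\ominus\mathcal{B}_\rho\subseteq\Omega$, so $d_h(\Omega\ominus\mathcal{B}_\rho,\Omega)=0$ and the Hausdorff distance collapses to the one-sided quantity $h(\Omega,\Omega\ominus\mathcal{B}_\rho)=\sup_{x\in\Omega}\inf_{y\in\Omega\ominus\mathcal{B}_\rho}\|x-y\|$. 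This reduction is what all three claims will rest on.

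For the lower bound $h\geq\rho$, I would exploit a boundary point. Since $\Omega$ is compact it is a proper closed subset of $\Real^n$, so $\partial\Omega\neq\varnothing$; fix $x\in\partial\Omega\subseteq\Omega$. The key observation is that if $y\in\Omega\ominus\mathcal{B}_\rho$ then every $z$ with $\|z-y\|\leq\rho$ lies in $\Omega$, so every point of the complement $\Real^n\setminus\Omega$ is at distance strictly greater than $\rho$ from $y$. Approximating $x$ by a sequence $z_k\notin\Omega$ with $z_k\to x$ (possible because $x$ is a boundary point) and passing to the limit in $\|z_k-y\|>\rho$ yields $\|x-y\|\geq\rho$ for every $y\in\Omega\ominus\mathcal{B}_\rho$. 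Taking the infimum over $y$ and then the supremum over $x\in\Omega$ gives $h\geq\rho$. For part \ref{itm:h1}, the erosion of a box has an explicit form: with $\Omega=\prod_i[\underline{x}_i,\overline{x}_i]$ and the infinity-norm ball $\mathcal{B}_\rho=[-\rho,\rho]^n$, one checks $\Omega\ominus\mathcal{B}_\rho=\prod_i[\underline{x}_i+\rho,\overline{x}_i-\rho]$. Given any $x\in\Omega$, clamping each coordinate into $[\underline{x}_i+\rho,\overline{x}_i-\rho]$ produces a point $y$ in the erosion with $\|x-y\|\leq\rho$, since each coordinate is moved by at most $\rho$. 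Hence $h\leq\rho$, and combined with the general lower bound this forces $h=\rho$.

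The main obstacle is part \ref{itm:h2}, the uniform convergence. The two ingredients are monotonicity of the erosion, $\rho'\leq\rho\Rightarrow\Omega\ominus\mathcal{B}_\rho\subseteq\Omega\ominus\mathcal{B}_{\rho'}$, and the identity $\bigcup_{\rho>0}(\Omega\ominus\mathcal{B}_\rho)=\text{int}(\Omega)$, which together with the fullness assumption $\overline{\text{int}(\Omega)}=\Omega$ shows the erosions exhaust a dense subset of $\Omega$. Pointwise this gives $\inf_y\|x-y\|\to 0$ for each fixed $x$, but the claim requires this to hold uniformly in $x$, and that is the delicate step. I would argue by contradiction: if $h$ did not tend to $0$, there would exist $\varepsilon>0$, $\rho_k\to 0$, and $x_k\in\Omega$ with $\|x_k\|_{\Omega\ominus\mathcal{B}_{\rho_k}}\geq\varepsilon$; by compactness extract $x_k\to x^*\in\Omega$, use fullness to find $w\in\text{int}(\Omega)$ within $\varepsilon/3$ of $x^*$, note $w\in\Omega\ominus\mathcal{B}_{\rho_k}$ for all large $k$ by monotonicity, and derive $\|x_k\|_{\Omega\ominus\mathcal{B}_{\rho_k}}\leq\|x_k-w\|<\varepsilon$, a contradiction. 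The fullness hypothesis is exactly what prevents ``thin'' parts of $\Omega$ with empty interior from remaining far from every erosion, and sequential compactness is what upgrades the pointwise statement to the uniform one.
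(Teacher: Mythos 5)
Your proof is correct, and it shares the overall skeleton of the paper's proof --- a boundary-point argument for the lower bound $h(\Omega,\Omega\ominus\mathcal{B}_\rho)\geq\rho$ and density of interior points for part (ii) --- but it is genuinely tighter in two places. First, for part (i) the paper simply asserts that the equality is ``clear''; your explicit erosion formula $\Omega\ominus\mathcal{B}_\rho=\prod_i[\underline{x}_i+\rho,\overline{x}_i-\rho]$ together with the coordinate-clamping bound is an actual proof, and it correctly exploits the fact that $\|\cdot\|$ is the infinity norm so that $\mathcal{B}_\rho$ is itself a box. Second, and more significantly, the paper's proof of part (ii) only establishes the pointwise statement $\lim_{\rho\to 0}\|x\|_{\Omega\ominus\mathcal{B}_\rho}=0$ for each fixed $x\in\Omega$ and then interchanges $\lim_{\rho\to 0}$ with $\sup_{x\in\Omega}$ without justification; this interchange is exactly the ``delicate step'' you flag, and it is not valid for arbitrary families of functions. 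Your contradiction argument --- extract $x_k\to x^*$ by sequential compactness of $\Omega$, pick $w\in\mathrm{int}(\Omega)$ near $x^*$ by fullness, and observe that $w$ belongs to $\Omega\ominus\mathcal{B}_{\rho_k}$ once $\rho_k$ is below the radius of a ball around $w$ contained in $\Omega$ --- supplies precisely the missing uniformity. (An alternative repair of the paper's step is Dini's theorem: the functions $x\mapsto\|x\|_{\Omega\ominus\mathcal{B}_\rho}$ are continuous, decrease pointwise to the zero function as $\rho\downarrow 0$, and $\Omega$ is compact, so the convergence is uniform.) In short, your write-up proves strictly more than what the paper's argument, as written, establishes.
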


\begin{proof}
  We show that $h(\Omega,\Omega\ominus\mathcal{B}_{\rho})\geq \rho$ first. Suppose that this is not true. Then $h(\Omega,\Omega\ominus\mathcal{B}_{\rho})< \rho$. It follows that $\forall v\in \Omega$, $\|v\|_{\Omega\ominus\mathcal{B}_{\rho}}<\rho$. Since $\Omega$ and $\Omega\ominus\mathcal{B}_\rho$ is compact, for a point $v'\in \partial\Omega$, we can find a $u_m\in \Omega\ominus\mathcal{B}_\rho$, such that $\|u_m-v'\|=\|v'\|_{\Omega\ominus\mathcal{B}_{\rho}}=\gamma<\rho$. By the property of boundary point, for any $\delta>0$ satisfying $\delta+\gamma<\rho$, there is a point $z\notin \Omega$ such that $\|z-v'\|\leq \delta$. Hence, $\|z-u_m\|\leq \|z-v'\|+\|v'-u_m\|\leq \delta+\gamma<\rho$. This implies $u_m\notin \Omega\ominus\mathcal{B}_\rho$, which is a contradiction. Thus, $h(\Omega,\Omega\ominus\mathcal{B}_{\rho})\geq \rho$.

  If $\Omega$ is an interval, it is clear that $h(\Omega,\Omega\ominus\mathcal{B}_{\rho})=\rho$. Hence, we only need to show that (ii) holds.

Since $\Omega\subset\Real^n$ is compact and full, every point in $\Omega$ is the limit of a convergent subsequence from int$(\Omega)$. Let $\{x_n\}$ be such sequence. Then given any $\varepsilon>0$ for a point $x\in \Omega$, there exists a positive integer $N$ such that
$$\|x_n-x\|<\varepsilon, \forall n\geq N,$$
where $x_n\in \text{int}(\Omega)$. It follows that there exists $\delta>0$ such that $\mathcal{B}_\delta(x_n)\subset \Omega$, which implies $x_n\in \Omega\ominus\mathcal{B}_\delta$. Thus
$$\lim_{\delta\to 0}\inf_{y\in\Omega\ominus\mathcal{B}_\delta}\|y-x\|=0.$$
Since it holds for all the points in $\Omega$, we have
\begin{align*}
  \lim_{\rho\to 0}d_h(\Omega,\Omega\ominus\mathcal{B}_{\rho})&=\lim_{\rho\to 0}\sup_{x\in \Omega}\|x\|_{\Omega\ominus\mathcal{B}_{\rho}}\\
&=\sup_{x\in\Omega}\lim_{\rho\to 0}\|x\|_{\Omega\ominus\mathcal{B}_{\rho}}=0.
\end{align*}
Therefore,
\begin{equation*} 
\lim_{\rho\to 0}h(\Omega,\Omega\ominus\mathcal{B}_{\rho})= \lim_{\rho\to 0} d_h(\Omega,\Omega\ominus\mathcal{B}_{\rho})=0.
\end{equation*}
\end{proof}

\begin{proposition}\em
  Let the assumptions of Theorem \ref{prop:cpre} hold. If, in addition, $f_p$ is invertible and the inverse function $f_p^{-1}$ is Lipschitz continuous on $\Omega$ for all $p\in\M$, then
  \begin{equation*}
    \lim_{\varepsilon\to \infty}h(X,\underline{X})=0, \lim_{\varepsilon\to \infty}h(X,\overline{X})=0,
  \end{equation*}
where $\varepsilon$ is the precision parameter of Algorithm \ref{alg:cpre}.
\end{proposition}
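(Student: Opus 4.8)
I read the stated limit as $\varepsilon\to 0$ (the precision parameter shrinks); the displayed $\varepsilon\to\infty$ is evidently a typographical slip, and the proof below establishes convergence as $\varepsilon\to 0$.

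\emph{Reduction to continuity of $I$.} The plan is first to eliminate the algorithm-specific sets $\underline X,\overline X$ and reduce everything to how the one-step map $I$ reacts to a small inflation or deflation of $\Omega$. By Theorem \ref{prop:cpre},
\[
I(\Omega\ominus\mathcal{B}_{\rho_1\varepsilon})\subset \underline{X}\subset X\subset \overline{X}\subset I(\Omega\oplus\mathcal{B}_{\rho_1\varepsilon}).
\]
Since $\underline X\subset X$, I have $h(X,\underline X)=d_h(X,\underline X)=\sup_{x\in X}\|x\|_{\underline X}$, and because $I(\Omega\ominus\mathcal{B}_{\rho_1\varepsilon})\subset\underline X$ only enlarges the set whose distance is measured, $\|x\|_{\underline X}\le\|x\|_{I(\Omega\ominus\mathcal{B}_{\rho_1\varepsilon})}$; hence $h(X,\underline X)\le h\big(I(\Omega),I(\Omega\ominus\mathcal{B}_{\rho_1\varepsilon})\big)$. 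Symmetrically, using $X\subset\overline X\subset I(\Omega\oplus\mathcal{B}_{\rho_1\varepsilon})$ I obtain $h(X,\overline X)\le h\big(I(\Omega),I(\Omega\oplus\mathcal{B}_{\rho_1\varepsilon})\big)$. It therefore suffices to prove that $I$ is continuous at $\Omega$ from the outside and from the inside, i.e. that both Hausdorff distances on the right vanish as $\varepsilon\to 0$.

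\emph{Where the difficulty sits.} The tempting shortcut is to note that $\pre(\Omega)=\bigcup_{p\in\M}f_p^{-1}(\Omega)$ is Lipschitz in the Hausdorff metric (each $f_p^{-1}$ is Lipschitz, and $h$ of a finite union is bounded by the maximum of the componentwise $h$), that $h(\Omega,\Omega\oplus\mathcal{B}_{\rho_1\varepsilon})\le\rho_1\varepsilon$ and $h(\Omega,\Omega\ominus\mathcal{B}_{\rho_1\varepsilon})\to 0$ by Proposition \ref{prop:hdis}\ref{itm:h2}, and then to compose. The obstacle, and the main hard point of the whole statement, is that $I(\Omega)=\pre(\Omega)\cap\Omega$ involves an \emph{intersection}, which is not Hausdorff-continuous in general (tangential boundaries can make the intersection jump). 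So I would treat the two directions separately by structural/compactness arguments rather than a naive Lipschitz chain.

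\emph{Outer continuity.} Since $I(\Omega)\subset I(\Omega\oplus\mathcal{B}_{\rho_1\varepsilon})$, the distance equals $\sup_{y\in I(\Omega\oplus\mathcal{B}_{\rho_1\varepsilon})}\|y\|_{I(\Omega)}$, and I would show it tends to $0$ by contradiction: if not, there exist $\delta>0$, $\rho_k\to 0$ and $y_k\in I(\Omega\oplus\mathcal{B}_{\rho_k})$ with $\|y_k\|_{I(\Omega)}\ge\delta$. The $y_k$ lie in a fixed compact inflation of $\Omega$, so a subsequence converges to some $y^\ast$ with $\|y^\ast\|_{I(\Omega)}\ge\delta$, whence $y^\ast\notin I(\Omega)$. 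But $y_k\in\Omega\oplus\mathcal{B}_{\rho_k}$ gives $\|y_k\|_\Omega\le\rho_k\to0$, so $y^\ast\in\Omega$; and each $y_k$ admits a mode $p_k$ with $f_{p_k}(y_k)\in\Omega\oplus\mathcal{B}_{\rho_k}$, so by finiteness of $\M$ a single mode $p$ recurs along a further subsequence, and continuity of $f_p$ forces $f_p(y^\ast)\in\Omega$. Thus $y^\ast\in\pre(\Omega)\cap\Omega=I(\Omega)$, a contradiction. This direction uses only continuity, finiteness of $\M$ and compactness.

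\emph{Inner continuity (the crux).} Here the shrinking of $\Omega$ can destroy parts of $X=I(\Omega)$, and I would first prove the key lemma
\[
I(\Omega)=\overline{\bigcup_{p\in\M}\text{int}\big(f_p^{-1}(\Omega)\cap\Omega\big)}.
\]
Writing $A_p:=f_p^{-1}(\Omega)\cap\Omega$, the inclusion ``$\supset$'' is immediate; for ``$\subset$'' I use that $X$ is full, so it is enough to place $\text{int}(X)$ inside the right-hand closure. If some open $V\subset\text{int}(X)\subset\text{int}(\Omega)$ met no $\text{int}(A_p)$, then every $x\in V$ would lie in some $A_p$ yet not in $\text{int}(A_p)=\text{int}(f_p^{-1}(\Omega))\cap\text{int}(\Omega)$, forcing $x\in\partial\big(f_p^{-1}(\Omega)\big)$ (each $f_p^{-1}(\Omega)$ is closed by continuity, so its boundary is nowhere dense); thus $V$ would lie in a finite union of nowhere dense sets, contradicting a Baire argument. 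Given the lemma, for $x\in X$ I choose $\hat x\in\text{int}(A_p)$ near $x$: then $\hat x\in\text{int}(\Omega)$ gives a margin $r>0$ with $\hat x\in\Omega\ominus\mathcal{B}_{\rho}$ for $\rho<r$, and since each $f_p$ is invertible and continuous (hence open, by invariance of domain) $f_p(\hat x)\in\text{int}(\Omega)$, giving a margin $s>0$ with $f_p(\hat x)\in\Omega\ominus\mathcal{B}_{\rho}$ for $\rho<s$; for $\rho<\min(r,s)$ this yields $\hat x\in I(\Omega\ominus\mathcal{B}_{\rho})$. Uniformity over $x\in X$ is then obtained by the same compactness/contradiction scheme as above, giving $h\big(I(\Omega),I(\Omega\ominus\mathcal{B}_{\rho_1\varepsilon})\big)\to0$. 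Combining the two directions with the reduction completes the proof; I expect the lemma together with the open-mapping step (the only place where invertibility and the Lipschitz inverse are genuinely needed) to be the hardest part.
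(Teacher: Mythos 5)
Your proof is correct, and you are right that the displayed $\lim_{\varepsilon\to\infty}$ is a typo for $\lim_{\varepsilon\to 0}$; however, it follows a genuinely different route from the paper's. The paper argues directly on the boxes: it first shows that every $y\in\partial\Omega$ has a preimage $f_p^{-1}(y)\in\partial X$ for some mode $p$, then uses the Lipschitz constant $\rho_2$ of the inverses to conclude that every point of an undetermined box in $\Delta X$ lies within $\rho_1\rho_2\varepsilon$ of $\partial X$, yielding the explicit sandwich $X\ominus\mathcal{B}_{\rho_1\rho_2\varepsilon}\subset\underline{X}\subset X\subset\overline{X}\subset X\oplus\mathcal{B}_{\rho_1\rho_2\varepsilon}$; convergence then follows from Proposition \ref{prop:hdis} \ref{itm:h2} applied to the full set $X$. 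You instead keep the sandwich of Theorem \ref{prop:cpre} as is, reduce the claim to Hausdorff continuity of the map $I$ at $\Omega$ under inflation/deflation, and prove the two semicontinuities separately: the outer one by a compactness/subsequence argument exploiting finiteness of $\M$, the inner one via a Baire-category density lemma for $\bigcup_p \mathrm{int}\bigl(f_p^{-1}(\Omega)\cap\Omega\bigr)$ combined with invariance of domain. The trade-off is clear: the paper's argument is quantitative and buys an explicit linear rate, $h(X,\overline{X})\le\rho_1\rho_2\varepsilon$ and a corresponding collar bound for $\underline{X}$, whereas yours gives no rate. In exchange, your proof never actually invokes the Lipschitz continuity of $f_p^{-1}$ — only injectivity and continuity of $f_p$ enter, through the open-mapping step — so it establishes the proposition under weaker hypotheses; it also confronts head-on the genuine obstruction that $I(\cdot)=\pre(\cdot)\cap(\cdot)$ involves an intersection, which is not Hausdorff-continuous in general, a point the paper's boundary-collar argument sidesteps rather than addresses. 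Both proofs ultimately rely on the standing assumption that $X$ is full: you use it in the density lemma, the paper uses it through Proposition \ref{prop:hdis} \ref{itm:h2}.
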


\begin{proof}
  Denote by $\rho_2$ the maximum Lipschitz constant of $\{f_p^{-1}\}_{p\in\M}$. For any $y\in \partial \Omega$, there exists a $p\in\M$ such that $f_p^{-1}(y)\in \partial X$. Denote $x=f_p^{-1}(y)$. If this is not true, then $x\in \text{int}(\Omega)$. Thus there exists $\delta_x>0$ such that $\mathcal{B}_{\delta_x}(x) \subset X$. By the continuity of $f_p^{-1}$, there exists a $\delta_y>0$ such that $\forall y'\in\mathcal{B}_{\delta_y}(y)$, $f_p^{-1}(y')\in\mathcal{B}_{\delta_x}(x)$. Thus $f_p^{-1}(y')\in X$. This implies $y \in \text{int}(\Omega)$, which is a contradiction. Any point $x\in [x]\in \Delta X$, there exists a $y'\in \partial \Omega$, such that $\|y'-f(x)\|<\rho_1\varepsilon$. By Theorem \ref{prop:cpre}, $x'=f_p^{-1}(y')\in\partial X$. It follows that $\|x'-x\|=\|f_p^{-1}(y')-f_p^{-1}(y)\|\leq \rho_2\|y'-y\|<\rho_2\rho_1\varepsilon$. Hence, for all $x\in [x]\in \Delta X$, there exists a $x'\in \partial X$, such that $\|x'-x\|<\rho_2\rho_1\varepsilon$. Using the same argument as in proving Theorem \ref{prop:cpre}, we have $\overline{X}\subset (X\oplus \mathcal{B}_{\rho_2\rho_1\varepsilon})$ and $\underline{X}\supset (X\ominus \mathcal{B}_{\rho_2\rho_1\varepsilon})$. Hence, $h(X, \underline{X})\leq h(X, X\ominus\mathcal{B}_{\rho_2\rho_1\varepsilon})$ and $h(X, \overline{X})\leq h(X, X\oplus\mathcal{B}_{\rho_2\rho_1\varepsilon})$. By Proposition \ref{prop:hdis} \ref{itm:h2} and the definition of Minkowski sum, the conclusion is proved.
\end{proof}

\subsection{Outer approximations of maximal controlled invariant sets}
As illustrated in section \ref{sec:mcis}, computing the exact maximal controlled invariant sets involves possibly an infinite number of iterations. 
It is natural to seek finitely determined outer approximations of the maximal invariant sets \cite{KordaHJ13,RunggerT16}. 

The following Algorithm \ref{alg:outerc}, which can be seen as a concrete realization of the conceptual Algorithm \ref{alg:mcis0}, generates an outer approximation of the maximal controlled invariant set within a given set $\Omega$ for system (\ref{eq:sw}).

\begin{algorithm}[htbp]
  \caption{Outer Approximation of $\I(\Omega)$}
  \label{alg:outerc}
  \begin{algorithmic}[1]
  \Require $\set{[f_p]}_{p\in\P},\Omega, \varepsilon$
    \State $X\leftarrow [\Omega],Y\leftarrow\Omega, X_c\leftarrow[\Omega]$
    \While{$X_c\neq \varnothing$}

    \State $[\underline{X}, \Delta X, X_c, \C]=\text{CPre}(\set{[f_p]}_{p\in\P},Y,X,\varepsilon)$
    \State $\overline{X}\leftarrow \underline{X}\cup\Delta X$
    \State $Y\leftarrow \cup_{[x]\in\overline{X}}[x]$
    \State $X\leftarrow \overline{X}$
    
    \EndWhile	
    \State  \Return $Y,\C$
   \end{algorithmic}
\end{algorithm}

\begin{theorem}\em \label{prop:outer}
  Let $\Omega\subset \Real^n$ be compact. Suppose that Assumption \ref{asp:f} holds. Denote by $\overline{Y}^\varepsilon$ the output of Algorithm \ref{alg:outerc} for a given precision $\varepsilon$. Then Algorithm \ref{alg:outerc} terminates in a finite number of steps. Furthermore, $\overline{Y}^\varepsilon$ is an union of intervals satisfying the following properties:
  \begin{enumerate}[label=(\roman*)]
  \item \label{itm:outer1}$I(\overline{Y}^\varepsilon)\subset \overline{Y}^\varepsilon\subset I(\overline{Y}^\varepsilon\oplus\mathcal{B}_{\rho_1\varepsilon})$ 
;
  \item \label{itm:outer2}if $0<\varepsilon_1<\varepsilon_2$, $\I(\Omega)\subset \overline{Y}^{\varepsilon_1}\subset \overline{Y}^{\varepsilon_2}$;
  \item \label{itm:outer3}$\I(\Omega)=\lim_{\varepsilon\to 0}\overline{Y}^\varepsilon$.
  \end{enumerate}
\end{theorem}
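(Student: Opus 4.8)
The plan is to view Algorithm~\ref{alg:outerc} as the fixed-point iteration of the outer-approximation operator supplied by Algorithm~\ref{alg:cpre}. Write $Y_0=\Omega$ and let $Y_{k+1}$ be the union of boxes $\overline{X}=\underline{X}\cup\Delta X$ returned by $\text{CPre}$ on input $Y_k$ at precision $\varepsilon$; by Theorem~\ref{prop:cpre} this operator is sandwiched as $I(Y_k)\subset Y_{k+1}\subset I(Y_k\oplus\mathcal{B}_{\rho_1\varepsilon})$, and in particular $Y_{k+1}\subset Y_k$ since $I(S)=\pre(S)\cap S\subset S$. I would first isolate three structural facts about $\text{CPre}$ on which everything else rests: (a) every box it outputs has all edges bounded below by a positive constant, because bisection halves the widest coordinate and stops once a width falls below $\varepsilon$, so the boxes cannot degenerate; (b) for a fixed target set the output is monotone in the precision, $\overline{X}^{\varepsilon_1}\subset\overline{X}^{\varepsilon_2}$ when $\varepsilon_1\le\varepsilon_2$, since lowering $\varepsilon$ only subdivides the boundary boxes in $\Delta X$ further and can only discard the resulting sub-boxes; and (c) the output is monotone in the set argument, $S\subset S'\Rightarrow\overline{X}(S)\subset\overline{X}(S')$, since a box is discarded only when all of its images miss the target.

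\emph{Termination and Claim~\ref{itm:outer1}.} Since $\{Y_k\}$ is a nested decreasing family of finite unions of sub-boxes of the bounded set $\Omega$, and by (a) these boxes cannot degenerate, only finitely many disjoint boxes can ever appear inside $\Omega$; each non-terminating pass removes the nonempty collection $X_c$ of boxes whose images all miss $Y_k$, so $Y_{k+1}\subsetneq Y_k$, and a strictly decreasing chain drawn from a finite pool reaches the stopping condition $X_c=\varnothing$ after finitely many steps, producing $\overline{Y}^\varepsilon:=Y_N$. At termination $\text{CPre}$ returns $\overline{Y}^\varepsilon$ unchanged, so $\overline{Y}^\varepsilon$ is a fixed point of the operator; the inclusion $I(\overline{Y}^\varepsilon)\subset\overline{Y}^\varepsilon$ is immediate from $I(S)\subset S$, while $\overline{Y}^\varepsilon\subset I(\overline{Y}^\varepsilon\oplus\mathcal{B}_{\rho_1\varepsilon})$ is precisely the upper bound of Theorem~\ref{prop:cpre} applied with $S=\overline{Y}^\varepsilon$.

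\emph{Claim~\ref{itm:outer2}.} For $\I(\Omega)\subset\overline{Y}^\varepsilon$ I would show $\I(\Omega)\subset Y_k$ for all $k$ by induction: the base case is $Y_0=\Omega\supset\I(\Omega)$, and assuming $\I(\Omega)\subset Y_k$, monotonicity of $I$ (from Proposition~\ref{prop:pre}\ref{itm:pre2}), the identity $\I(\Omega)=I(\I(\Omega))$ of Proposition~\ref{prop:conv}\ref{itm:fp3}, and the lower bound $I(Y_k)\subset Y_{k+1}$ of Theorem~\ref{prop:cpre} give $\I(\Omega)=I(\I(\Omega))\subset I(Y_k)\subset Y_{k+1}$. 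For the nesting $\overline{Y}^{\varepsilon_1}\subset\overline{Y}^{\varepsilon_2}$ I would prove $Y_k^{\varepsilon_1}\subset Y_k^{\varepsilon_2}$ by induction on $k$, chaining facts (b) and (c): from $Y_k^{\varepsilon_1}\subset Y_k^{\varepsilon_2}$ one gets $Y_{k+1}^{\varepsilon_1}=\overline{X}^{\varepsilon_1}(Y_k^{\varepsilon_1})\subset\overline{X}^{\varepsilon_2}(Y_k^{\varepsilon_1})\subset\overline{X}^{\varepsilon_2}(Y_k^{\varepsilon_2})=Y_{k+1}^{\varepsilon_2}$, and then pass to the terminal values by extending the shorter run with its stationary fixed point. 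This is the step I expect to be the main obstacle: facts (b) and (c) are intuitively clear but demand careful bookkeeping of the adaptive bisection tree, since the partitions generated for different targets and precisions need not coincide, and the inclusions must be argued pointwise by comparing, for each point, the box containing it in each run.

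\emph{Claim~\ref{itm:outer3}.} By \ref{itm:outer2} the family $\{\overline{Y}^\varepsilon\}$ is nested and decreasing as $\varepsilon\to0$, so its set limit exists and equals $\bigcap_{\varepsilon>0}\overline{Y}^\varepsilon=:\overline{Y}^\ast$, and $\I(\Omega)\subset\overline{Y}^\ast$ follows directly from \ref{itm:outer2}. For the reverse inclusion I would take $x\in\overline{Y}^\ast$; by \ref{itm:outer1}, $x\in I(\overline{Y}^\varepsilon\oplus\mathcal{B}_{\rho_1\varepsilon})$ for every $\varepsilon$, so some mode $p$ satisfies $f_p(x)\in\overline{Y}^\varepsilon\oplus\mathcal{B}_{\rho_1\varepsilon}$. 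Since $\M$ is finite, one $p$ works along a sequence $\varepsilon\to0$ (the pigeonhole argument used in Proposition~\ref{prop:conv}), and because the closed enclosures $\overline{Y}^\varepsilon\oplus\mathcal{B}_{\rho_1\varepsilon}$ decrease to $\overline{Y}^\ast$, the limit gives $f_p(x)\in\overline{Y}^\ast$, i.e.\ $x\in\pre(\overline{Y}^\ast)$. Hence $\overline{Y}^\ast\subset\pre(\overline{Y}^\ast)$, so $\overline{Y}^\ast$ is controlled invariant by Proposition~\ref{prop:invc}; being a controlled invariant subset of $\Omega$, it lies in $\I(\Omega)$ by maximality, which yields $\overline{Y}^\ast=\I(\Omega)$ and closes the argument.
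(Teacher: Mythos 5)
Your proposal is correct, and its skeleton for termination, claim (i), and claim (ii) coincides with the paper's: termination via a strictly decreasing sequence of finite unions of non-degenerate boxes, claim (i) from the fixed-point property at termination together with the sandwich of Theorem~\ref{prop:cpre}, and claim (ii) by induction over iterations. Two places differ genuinely. First, for $\I(\Omega)\subset\overline{Y}^{\varepsilon}$ you run a clean forward induction, $\I(\Omega)=I(\I(\Omega))\subset I(Y_k)\subset Y_{k+1}$, combining Proposition~\ref{prop:conv}(iii), monotonicity of $I$, and the lower bound of Theorem~\ref{prop:cpre}; the paper instead argues by contradiction about removed points, and as written it tests images against $\Omega$ rather than against $Y_{j-1}$, which only becomes correct after an induction of exactly your form, so your version is tidier. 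Second, and more substantially, your proof of (iii) stays entirely at the set level: from (i) you extract $f_p(x)\in\overline{Y}^{\varepsilon}\oplus\mathcal{B}_{\rho_1\varepsilon}$, pigeonhole over the finite mode set fixes one $p$ along a sequence $\varepsilon\to 0$, and the nested closed enclosures $\overline{Y}^{\varepsilon}\oplus\mathcal{B}_{\rho_1\varepsilon}$ intersect down to $\overline{Y}^{\ast}$ (this step is sound: a point of the intersection is within $\rho_1\varepsilon$ of the closed set $\overline{Y}^{\varepsilon}$ for every $\varepsilon$, hence lies in each $\overline{Y}^{\varepsilon}$), giving $\overline{Y}^{\ast}\subset\pre(\overline{Y}^{\ast})$ and then maximality finishes. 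The paper instead descends back to the box level: assuming $\overline{Y}$ is not invariant, it separates $f_p(y)$ from $\overline{Y}^{\varepsilon_j}$ by a $\delta$-ball and uses convergence of the inclusion functions to show the box containing $y$ would have been discarded. Your route is more modular, reusing (i) and (ii) instead of re-opening the interval machinery, at the price of a small nested-intersection lemma; the paper's route is self-contained at the algorithmic level but repeats work that (i) already encodes.

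One caveat, which you flag yourself: the monotonicity facts (b) and (c) behind the nesting $\overline{Y}^{\varepsilon_1}\subset\overline{Y}^{\varepsilon_2}$ are precisely where the paper is also informal (``some intervals \dots are possible to be included in $X_{c,1}^{\varepsilon_1}$''). A rigorous version must compare, for each point, its boxes in the two runs inside the common bisection tree, and it silently uses inclusion monotonicity of $[f_p]$, i.e.\ $[x']\subset[x]$ implies $[f_p]([x'])\subset[f_p]([x])$, which is not part of Definition~\ref{def:intf} though it holds for natural and mean-value forms. So your proposal is no less rigorous than the paper on this point, but neither closes that gap; your observation that the two runs may terminate at different step counts, handled by extending the shorter run with its stationary value, is a detail the paper omits entirely.
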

\begin{proof}
  We use subscript $j$ to denote the corresponding sets in $j$th iteration ($j\in \Z_{\geq 0}$), i.e., $X_{c,j}$, $\Delta X_{j}$, and $Y_j$ represent $X_c$, $\Delta X$ and $Y$ in the $j$th iteration, respectively.

  Firstly, we show that Algorithm \ref{alg:outerc} stops in finite steps. In each iteration, the part taken by $X_{c,j}$ is removed from $Y_j$, yielding $Y_j\subset Y_{j-1}$. Hence, $\{Y_j\}$ is an non-increasing sequence of sets with $Y_0=\Omega$. Under a given precision $\varepsilon>0$, $Y_j$ is represented by a union of intervals with minimum width $\varepsilon$. Suppose for all $j\in \Z_{\geq 0}$, $X_{c,j}\neq \varnothing$. This implies there always be some intervals removed from $Y_{j-1}$, and $\{Y_j\}$ is strictly decreasing. Then there must exists an $N\in \Z_{>0}$ such that $Y_N=\varnothing$, which means $X^N_c=\varnothing$ and the algorithm stops at step $N$, since $Y_j$ is comprised of a finite number of intervals ($\Omega$ is compact). Therefore, Algorithm \ref{alg:outerc} terminates in finite steps.

Secondly, we are going to prove the correctness of \ref{itm:outer1}. Assume Algorithm \ref{alg:outerc} stops at step $N>0$. Then $X_{c,N}=\varnothing$, and $Y_{N-1}=X_N\cup \Delta X_N= Y_N=\overline{Y}^\varepsilon$. 
By Theorem \ref{prop:cpre}, $Y_N\subset I(Y_{N-1}\oplus \mathcal{B}_{\rho_1\varepsilon})$. 
Moreover, we can conclude that $x\in Y_N$ for all $x\in I(Y_N)$, otherwise $[f_p](x)\cap Y_{N-1}=\varnothing$ for all $p\in \M$, i.e., $x\notin I(Y_{N})=I(Y_{N})$. Hence, $I(Y_{N-1})\subset Y_N$, and \ref{itm:outer1} is proved.

Thirdly, to prove \ref{itm:outer2}, we first consider $\I(\Omega)\subset Y^\varepsilon$. For the sake of contradiction, let $y\in \I(\Omega)$ but $y\notin Y_N$ for a $N\in\Z_{>0}$. Then $y\in \Omega \setminus Y_N$. According to the algorithm, $\forall z\in \Omega\setminus Y_N$, there must be a step $0<j\leq N$ such that $[f_p](z)\cap \Omega=\varnothing$ for all $p\in \P$. This indicates that $z\notin \I(\Omega)$, which is a contradiction. Thus $\I(\Omega)\subset Y^{\varepsilon}$. Next we prove $Y^{\varepsilon_1}\subset Y^{\varepsilon_2}$ by induction. Consider the first two steps: $Y_0^{\varepsilon_1}=Y_0^{\varepsilon_2}=\Omega$. Since $0<\varepsilon_1<\varepsilon_2$, some intervals in $\Delta X_1^{\varepsilon_2}$ will be divided into finer boxes and are possible to be included in $X_{c,1}^{\varepsilon_1}$, and thus $X_{c,1}^{\varepsilon_2}\subset X_{c,1}^{\varepsilon_1}$. Together with $Y_1^{\varepsilon_1}=Y_0^{\varepsilon_1}\setminus X_{c,1}^{\varepsilon_1}$, and $Y_1^{\varepsilon_2}=Y_0^{\varepsilon_2}\setminus X_{c,1}^{\varepsilon_2}$, we have $Y_1^{\varepsilon_1}\subset Y_1^{\varepsilon_2}$. Assume $Y_j^{\varepsilon_1}\subset Y_j^{\varepsilon_2}$ for any step $1\leq j<N$. Then $X_{c,j}^{\varepsilon_2}\subset X_{c,j}^{\varepsilon_1}$, which gives $Y_{j+1}^{\varepsilon_1}\subset Y_{j+1}^{\varepsilon_2}$. Hence \ref{itm:outer2} is proved.

Lastly, we show \ref{itm:outer3}. Consider a decreasing sequence $\{\varepsilon_j\}_{j=1}^{\infty}$ with $\varepsilon_j>0$ and $\lim_{j\to \infty}\varepsilon_j=0$. Since $\overline{Y}^{\varepsilon_j}$ is compact, $\lim_{\varepsilon_j\to 0}\overline{Y}^{\varepsilon_j}$ exists and is given by the compact set $\bigcap_{j=1}^{\infty}\overline{Y}^{\varepsilon_j}$. Let $\overline{Y}=\bigcap_{j=1}^{\infty}\overline{Y}^{\varepsilon_j}$. If every $\overline{Y}^{\varepsilon_j}$ is nonempty, then $\overline{Y}$ is nonempty. By \ref{itm:outer2}, $\I(\Omega)\subset \overline{Y}^{\varepsilon_j}$ for all $j\geq 1$. Then it is clear that $\I(\Omega)\subset \overline{Y}$. Next, we claim that $\overline{Y}\subset \I(\Omega)$. If this is not true, then there exists $y\in \overline{Y}$ such that $f_p(y)\notin \overline{Y}$ for all $p\in \P$, i.e., $f_p(X)\in \overline{Y}_c$, which is the complement of $\overline{Y}$ and is open. Then it follows that there exists $\delta>0$ such that $\mathcal{B}_{\delta}(f_p(y))\subset \overline{Y}_c$ for all $p\in\P$. Furthermore, by the definition of set limit, there exists a $J_1$ sufficiently large such that $B_{\delta}(f_p(y))\cap \overline{Y}^{\varepsilon_j}=\varnothing$ for all $p\in\P$ and $j\geq J_1$. Then it is only possible that $y\in [x]\in \Delta X_j, j\geq J_1$. Since $f_p$ is a continuous inclusion function, there exists a $J_2$ such that $[f_p]([x])\subset \mathcal{B}_{\delta}(f_p(y))$ for all $p\in\P$ and $[x]\in \Delta X_j, j\geq J_2$. Then for all $j\geq \max\{J_1, J_2\}$, we have $[f_p]([x])\cap Y_N=\varnothing$, which is contradictory with the fact that $y\in \Delta X_j$. Hence, \ref{itm:outer3} is true.
\end{proof}

Theorem \ref{prop:outer} indicates that the exact maximal invariant sets can be outer approximated in an arbitrary precision, as illustrated in the following example.

\begin{example}\em \label{eg:lti}
  Consider a single mode linear time invariant system $x_{k+1}=Ax_k$, where
  \begin{equation*}
    A=\begin{bmatrix} 1.0810 & 0.4517 \\ -0.0903 & 0.7197 \end{bmatrix}.
  \end{equation*}

With a pair of complex eigenvalues $0.9003\pm 0.0903i$, the region of attraction of this LTI system is the entire plane. Given a compact set $\Omega=[-1,1]\times[-1,1]$, however, the positively invariant set inside $\Omega$ is not simply $\Omega$ itself because of the spiral trajectories governed by the dynamics.

In this example, the maximal positively invariant set within $\Omega$ is bounded by two trajectories, which is marked by the two red curves. Figure \ref{fig:LTIouters} shows the approximation results with different choices of precision $\varepsilon$ ($\varepsilon=0.05,0.01,0.0063,0.001$, respectively) using Algorithm \ref{alg:outerc}. It can be observed that the approximation error decreases as $\varepsilon$ becomes smaller.%Time cost are $4.018s, 27.856s, $

\begin{figure}[htbp]
  \centering
  \includegraphics[scale=0.65]{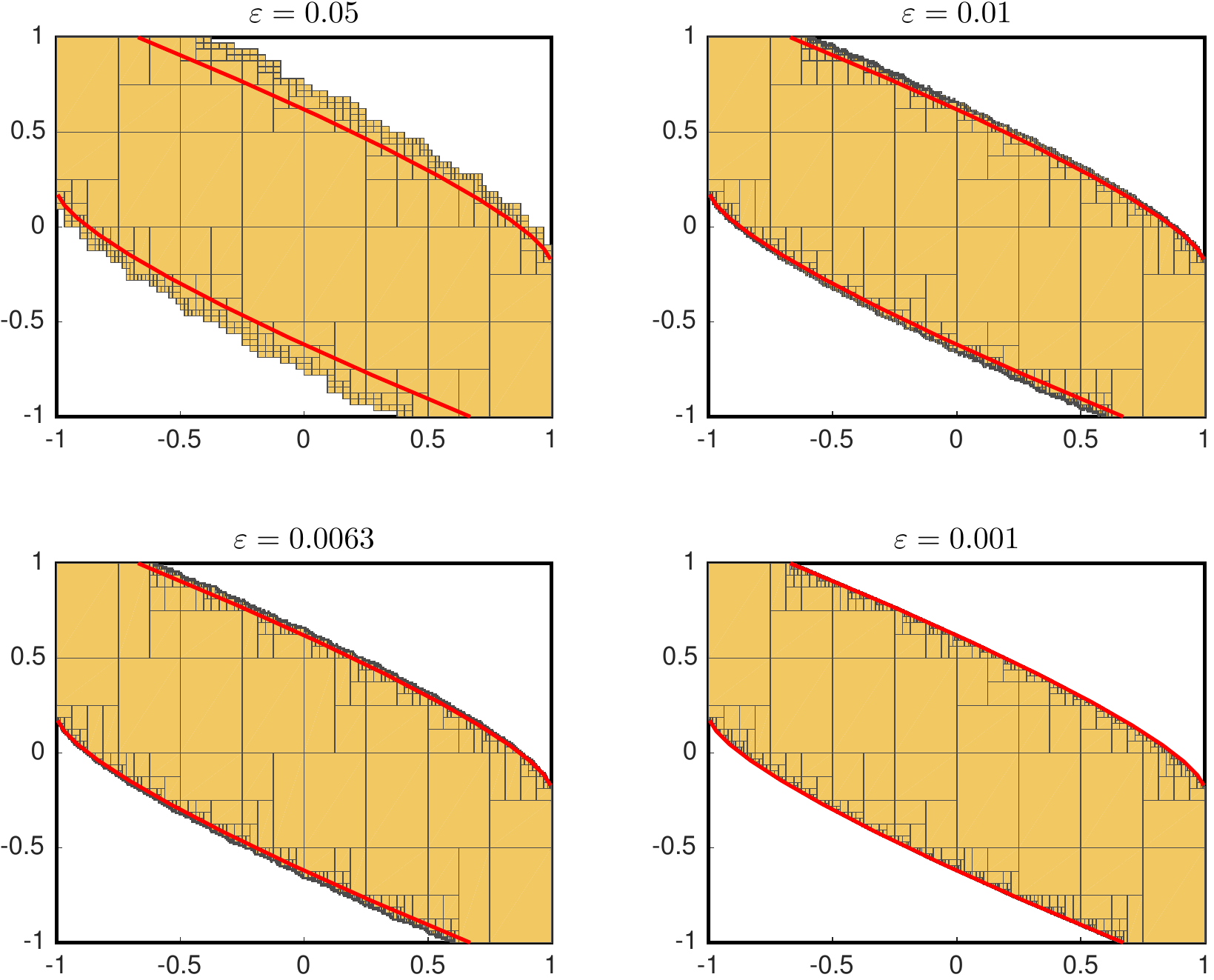}
  \caption{Outer approximations of $\I(\Omega)$ with different precision parameters.}
  \label{fig:LTIouters}
\end{figure}

\end{example}

\subsection{Controlled invariant sets under robust condition}
Outer approximations of the maximal invariant sets are not invariant by definition. However, we are able to find inner-approximations that are invariant, provided that the system satisfies a certain robustly invariant condition, which is introduced in the following definition.

\begin{definition} \label{def:rbmargin}\em
A compact set $\Omega$ is said to be a \emph{$r$-robustly controlled invariant set} for system (\ref{eq:sw}) 
if
  \begin{equation}
    \label{eq:invmargin}
    \Omega \subset \pre(\Omega\ominus \mathcal{B}_r),
  \end{equation}
where $r\geq 0$, and $\ominus$ is the Pontryagin difference. Denote by $r^*$ the supremum of $r$ such that (\ref{eq:invmargin}) is satisfied, which is called the \emph{robust invariance margin} of $\Omega$.
\end{definition}

Clearly, if a set is $r$-robustly controlled invariant, then it is also $r'$-robustly controlled invariant for all $r'\in (0,r)$. Systems with positive robust invariance margins of a given set $\Omega$ are able to be controlled invariant even under a certain degree of uncertainties, including exogenous disturbances, modeling errors and computational errors that are introduced in computing reachable sets. The larger the margin is, the higher degree of uncertainties the system can tolerate.

\begin{proposition}\em \label{prop:rb0}
  Suppose that Assumption \ref{asp:f} holds. Given a compact set $\Omega\subset \Real^n$, let $\I(\Omega)$ be the maximal invariant set within $\Omega$. Suppose that $\I(\Omega)\neq \Omega$. Then $\I(\Omega)$ is with zero robust invariance margin.
\end{proposition}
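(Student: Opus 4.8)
The plan is to prove the contrapositive: if the robust invariance margin $r^*$ of $\I(\Omega)$ is strictly positive, then $\I(\Omega)=\Omega$, contradicting the standing hypothesis $\I(\Omega)\neq\Omega$. So I would assume there is some $r>0$ with $\I(\Omega)\subset\pre(\I(\Omega)\ominus\mathcal{B}_r)$ and try to enlarge $\I(\Omega)$ into a \emph{strictly larger} controlled invariant subset of $\Omega$. By the maximality clause in Definition~\ref{def:maxinvc}, no such set can exist unless $\I(\Omega)$ already fills out $\Omega$, which is exactly the contradiction I am after. I may assume $\I(\Omega)\neq\varnothing$, since otherwise the margin is vacuously infinite and the statement concerns a nontrivial invariant set.

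The construction I would use is $S:=(\I(\Omega)\oplus\mathcal{B}_\delta)\cap\Omega$ with $\delta:=r/\rho_1$, where $\rho_1$ is the Lipschitz constant of Assumption~\ref{asp:f}. First I would show $S$ is controlled invariant. Take any $x\in S$; since $\I(\Omega)$ is compact (Proposition~\ref{prop:conv}), there is $x'\in\I(\Omega)$ with $\|x-x'\|\leq\delta$. The assumed robust condition supplies a mode $p\in\M$ with $f_p(x')\in\I(\Omega)\ominus\mathcal{B}_r$. Applying Assumption~\ref{asp:f} on $\Omega$ (legitimate because $x,x'\in\Omega$) gives $\|f_p(x)-f_p(x')\|\leq\rho_1\|x-x'\|\leq\rho_1\delta=r$, so $f_p(x)\in(\I(\Omega)\ominus\mathcal{B}_r)\oplus\mathcal{B}_r$. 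Invoking the elementary Pontryagin/Minkowski inclusion $(A\ominus\mathcal{B}_r)\oplus\mathcal{B}_r\subset A$, I conclude $f_p(x)\in\I(\Omega)\subset S$. Hence every point of $S$ can be driven back into $S$, so by Proposition~\ref{prop:invc} the set $S$ is controlled invariant, and $S\subset\Omega$ by construction.

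Maximality (Definition~\ref{def:maxinvc}) then forces $S\subset\I(\Omega)$, and since trivially $\I(\Omega)\subset S$, I get $S=\I(\Omega)$, i.e.\ $(\I(\Omega)\oplus\mathcal{B}_\delta)\cap\Omega=\I(\Omega)$. Read literally, this says every point of $\Omega$ within distance $\delta$ of $\I(\Omega)$ already lies in $\I(\Omega)$; equivalently, $\I(\Omega)$ and $\Omega\setminus\I(\Omega)$ are separated by a distance of at least $\delta>0$.

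The hard part will be closing this last step, and this is where I expect the main obstacle to sit. To derive the contradiction I need $\Omega\setminus\I(\Omega)\neq\varnothing$ (which holds by hypothesis) to be incompatible with a positive separation from $\I(\Omega)$. Here I would invoke connectedness of $\Omega$: the sets $\I(\Omega)$ and $\overline{\Omega\setminus\I(\Omega)}$ are closed, their union is $\Omega$, and the separation just derived makes them disjoint, so a connected $\Omega$ cannot be partitioned into two disjoint nonempty closed pieces, forcing $\Omega\setminus\I(\Omega)=\varnothing$ and hence $\I(\Omega)=\Omega$. Equivalently, connectedness guarantees a boundary point of $\I(\Omega)$ lying in $\operatorname{int}(\Omega)$, which is precisely what makes $S$ strictly larger than $\I(\Omega)$. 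I would flag that this topological hypothesis is essential rather than cosmetic: if $\I(\Omega)$ could coincide with an isolated connected component of a disconnected $\Omega$, it may retain a strictly positive margin while remaining a proper subset, so the argument genuinely relies on $\Omega$ being connected (as it is for the box domains used throughout the paper). Verifying that this is the intended regularity, and handling the enlargement cleanly near $\partial\Omega$, is the delicate part of the proof.
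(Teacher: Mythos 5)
Your construction is correct (under the hypotheses you flag) and it takes a genuinely different route from the paper's. The paper argues locally: it supposes a point $x\in\partial\I(\Omega)\cap\mathrm{int}(\Omega)$ is sent by some mode into the interior, and uses continuity of $f_p$ to conclude that a whole ball around $x$ stays controlled-invariant, contradicting $x\in\partial\I(\Omega)$. (As written, the paper's proof also has slips: its final contradiction reads ``$x$ is an interior point of $\Omega$,'' which contradicts nothing since that was assumed; several occurrences of $\Omega$ should be $\I(\Omega)$, and the step from ``$f_p(\mathcal{B}_{\delta}(x))\subset\I(\Omega)$'' to ``$x\in\mathrm{int}(\I(\Omega))$'' needs exactly the maximality argument you make explicit --- one-step containment alone proves nothing.) Your proof instead fattens globally: $S=(\I(\Omega)\oplus\mathcal{B}_\delta)\cap\Omega$ with $\delta=r/\rho_1$ is shown controlled invariant via the Lipschitz bound and $(A\ominus\mathcal{B}_r)\oplus\mathcal{B}_r\subset A$, maximality forces $S=\I(\Omega)$, and hence $\I(\Omega)$ is separated from $\Omega\setminus\I(\Omega)$ by a positive distance. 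Your route is quantitative (it uses $\rho_1$ where the paper needs only continuity of $f_p$) and it surfaces the maximality step; the paper's local route, once repaired, is more elementary in that it never needs the Lipschitz constant at all.

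Your connectedness caveat is not a defect of your argument but a genuine defect of the proposition: the statement is false for disconnected $\Omega$. Take the single mode $f(x)=x/2+1/4$ on $\Omega=[0,1]\cup[2,3]$. Points of $[2,3]$ exit $\Omega$ in one step, so $\I(\Omega)=[0,1]\neq\Omega$; yet $[0,1]\ominus\mathcal{B}_r=[r,1-r]$ and $f^{-1}([r,1-r])=[2r-\tfrac12,\tfrac32-2r]\supset[0,1]$ for all $r\le\tfrac14$, so $\I(\Omega)$ has robust invariance margin $\tfrac14>0$. The paper's proof conceals the same gap: it never establishes that the set $\partial\I(\Omega)\cap\mathrm{int}(\Omega)$, over which its contradiction argument quantifies, is nonempty --- and in the example above it is empty, making that argument vacuous. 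So both routes hit the same topological obstruction; you are right that connectedness of $\Omega$ (true for the box domains used throughout the paper), together with $\I(\Omega)\neq\varnothing$ (without which the margin is vacuously infinite, not zero), must be added for the proposition to hold as stated.
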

\begin{proof}
  We prove this by showing that some boundary points of $\I(\Omega)$ will be mapped into the boundary of $\I(\Omega)$, which implies that its robust invariance margin is zero. For the purpose of contradiction, we assume $x\in (\partial \I(\Omega) \cap \text{int}(\Omega))$, and there exists a $p \in \M$ that $f_p(x) \in \text{int}(\Omega)$. That implies there exists a $r>0$ such that $B_r(f_p(x))\subset \Omega$. By continuity of $f_p$, we can find a $\delta(r)>0$ such that any $x' \in B_{\delta(r)}(x)$ satisfies $f_p(x')\in B_r(f_p(x))$, and thus $f_p(B_{\delta(r)}(x))\subset \Omega$, which means $x$ is an interior point of $\Omega$. This is contradictory with the condition.
\end{proof}

Likewise, we are interested in finding the maximal $r$-robustly invariant sets within a given compact set. Similar to Definition \ref{def:maxinvc}, we can have the following definition.

\begin{definition}\em
  Given a set $\Omega\subset \Real^n$, the set $\I_r(\Omega)$ is said to be the maximal $r$-robustly invariant set inside $\Omega$ for system (\ref{eq:sw}), if it is $r$-robustly invariant and contains every $r$-robustly invariant sets inside $\Omega$.
\end{definition}

We modify the mapping (\ref{eq:mapi}) into
\begin{equation}
  \label{eq:mapir}
  I_r(X)=\pre(X\ominus \mathcal{B}_r)\cap X.
\end{equation}
Similarly, we denote by $I_r^j$ ($j\in \Z_{>0}$) the $j$-times repeated compositions of the mapping $I_r$. The conceptual procedure of computing a $r$-robustly controlled invariant set within a given compact set $\Omega$ can be obtained by modifying Algorithm \ref{alg:mcis0}.
\begin{algorithm}[H]
  \centering
  \caption{Computation of $\I_r(\Omega)$}
  \label{alg:mcis1}
  \begin{algorithmic}[1]
    \Require $\Omega,\pre, r$
    \State $\widetilde{X}=\Omega, X= \varnothing$
    \While{$X\neq \widetilde{X}$}
    \State $X=\widetilde{X}, \widetilde{X}=\pre(X\ominus \mathcal{B}_{r}) \cap X$
    \EndWhile
    \State \Return $X$
  \end{algorithmic}
\end{algorithm}

\begin{proposition}\em \label{prop:convrb}
  Let $\Omega\subset \Real^n$ be closed. Then $\I_r(\Omega)=\lim_{j\to \infty}I_r^j(\Omega)$.
\end{proposition}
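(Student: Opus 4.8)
The plan is to mirror the proof of Proposition~\ref{prop:conv}\ref{itm:fp1}, with the map $I$ replaced by $I_r$ and the extra properties of the Pontryagin difference supplied where they are needed. The reformulation I would use throughout is that Definition~\ref{def:rbmargin} reads as a fixed-point condition: a set $W$ is $r$-robustly controlled invariant, i.e. $W\subset\pre(W\ominus\mathcal{B}_r)$, precisely when $I_r(W)=W$, so the $r$-robustly invariant subsets of $\Omega$ are exactly the fixed points of $I_r$ contained in $\Omega$.

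First I would show that $\{I_r^j(\Omega)\}_{j\ge1}$ is a decreasing sequence of closed sets whose set limit is $\bigcap_{j\ge1}I_r^j(\Omega)$. Monotonicity is immediate from $I_r(X)=\pre(X\ominus\mathcal{B}_r)\cap X\subset X$. For closedness, I would note that $X\ominus\mathcal{B}_r=\bigcap_{b\in\mathcal{B}_r}(X-b)$ is closed whenever $X$ is, so Proposition~\ref{prop:pre}\ref{itm:pre1} makes $\pre(X\ominus\mathcal{B}_r)$ closed and hence $I_r(X)$ closed; an induction from the closed set $\Omega$ then gives closedness of every $I_r^j(\Omega)$. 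For a decreasing sequence of closed sets the limit equals the intersection, so it exists without any compactness assumption, which is why the statement only requires $\Omega$ closed.

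Next I would prove the two inclusions. For $\I_r(\Omega)\subset\bigcap_j I_r^j(\Omega)$, I would show by induction that any $r$-robustly invariant $W\subset\Omega$ satisfies $W\subset I_r^j(\Omega)$ for all $j$: the base step combines $W\subset\Omega$ with $W\subset\pre(W\ominus\mathcal{B}_r)\subset\pre(\Omega\ominus\mathcal{B}_r)$, using monotonicity of $\ominus$ in its first argument followed by Proposition~\ref{prop:pre}\ref{itm:pre2}, and the inductive step repeats this with $I_r^j(\Omega)$ in place of $\Omega$. Running this for \emph{every} such $W$ also yields maximality, so $L:=\bigcap_j I_r^j(\Omega)$ contains all $r$-robustly invariant subsets of $\Omega$. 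For the reverse inclusion I would take $x_0\in L$; since $x_0\in I_r^j(\Omega)$ for every $j$, each $j\ge1$ supplies a mode $p_j$ with $f_{p_j}(x_0)\in I_r^{j-1}(\Omega)\ominus\mathcal{B}_r$, finiteness of $\M$ extracts a constant subsequence $p_j\equiv p$, and the decreasing nature of $I_r^{j-1}(\Omega)\ominus\mathcal{B}_r$ forces $f_p(x_0)\in\bigcap_j\bigl(I_r^{j-1}(\Omega)\ominus\mathcal{B}_r\bigr)$.

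The hard part --- the one ingredient genuinely beyond Proposition~\ref{prop:conv} --- will be to commute the infinite intersection with the Pontryagin difference, namely $\bigcap_j\bigl(A_j\ominus\mathcal{B}_r\bigr)=\bigl(\bigcap_j A_j\bigr)\ominus\mathcal{B}_r$. I expect this to follow by unfolding $A\ominus\mathcal{B}_r=\{c:c+b\in A,\ \forall b\in\mathcal{B}_r\}$ and interchanging the two universal quantifiers (over $j$ and over $b$), which identifies the set reached above with $L\ominus\mathcal{B}_r$. Then $f_p(x_0)\in L\ominus\mathcal{B}_r$ gives $x_0\in\pre(L\ominus\mathcal{B}_r)$, so $L\subset\pre(L\ominus\mathcal{B}_r)$; thus $L$ is itself $r$-robustly invariant and contained in $\Omega$, whence $L\subset\I_r(\Omega)$ by maximality. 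Combining the two inclusions yields $\I_r(\Omega)=L=\lim_{j\to\infty}I_r^j(\Omega)$.
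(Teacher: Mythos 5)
Your proof is correct, and its skeleton---the decreasing sequence of closed sets whose limit is $\bigcap_{j\ge 1}I_r^j(\Omega)$, the constant-subsequence argument from finiteness of $\M$, and an induction for maximality---is the same as the paper's. The substantive difference lies in the inclusion $\bigcap_{j}I_r^j(\Omega)\subset \I_r(\Omega)$, and there your version is tighter than the paper's own write-up. The paper states the goal of showing that the limit set is $r$-robustly controlled invariant, but after extracting the constant mode $p$ it records only $f_p(x)\in I_r^{j-1}(\Omega)$ (the $\ominus\mathcal{B}_r$ disappears) and concludes that the limit is a \emph{controlled} invariant subset of $\Omega$; that alone does not place the limit inside $\I_r(\Omega)$, because the definition of $\I_r(\Omega)$ only guarantees that it contains every $r$-\emph{robustly} invariant subset, and a merely controlled invariant set (e.g.\ $\I(\Omega)$ itself) may be strictly larger than $\I_r(\Omega)$. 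Your proof keeps $f_p(x_0)\in I_r^{j-1}(\Omega)\ominus\mathcal{B}_r$ throughout and closes the argument with the identity $\bigcap_j\bigl(A_j\ominus\mathcal{B}_r\bigr)=\bigl(\bigcap_j A_j\bigr)\ominus\mathcal{B}_r$, which holds exactly as you say: both sides are defined by two universal quantifiers (over $j$ and over $b\in\mathcal{B}_r$) that may be interchanged, with no closedness or compactness needed. This supplies the $r$-robust invariance of the limit that the paper leaves implicit, so what you call the ``hard part'' is precisely the step that completes the argument. Two minor departures are also fine: you obtain closedness of $X\ominus\mathcal{B}_r$ elementarily as $\bigcap_{b\in\mathcal{B}_r}(X-b)$, where the paper cites Theorem 2.1 of \cite{KolmanovskyG98}; and you run the maximality induction over an arbitrary $r$-robustly invariant $W\subset\Omega$ (so the limit contains every such $W$ at once), where the paper argues contrapositively about the points discarded in passing from $I_r^j(\Omega)$ to $I_r^{j+1}(\Omega)$---the two inductions are equivalent, yours being marginally cleaner.
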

\begin{proof}
  First of all, from (\ref{eq:mapi}), it is easy to see that $I^{j-1}\subset I^j$ for all $j\geq 1$. According to \cite[Theorem 2.1]{KolmanovskyG98}, $\Omega\ominus \mathcal{B}_r$ is closed. By Proposition \ref{prop:pre} \ref{itm:pre1}, the set $\pre(\Omega\ominus \mathcal{B}_r)$, and hence $I_r^j(\Omega)$, is closed. With the fact that $\{I_r^{j}\}$ is a non-increasing sequence, the set limit of $I^j(\Omega)$ exists and equals the closed set $\bigcap_{j=1}^\infty I_r^j(\Omega)$. If $I_r^j(\Omega)$ is nonempty for every $j>0$, then $\bigcap_{j=1}^\infty I_r^j(\Omega)$ is nonempty \cite[p. 225, 1.6]{DugundjiBook67}.

Second, we show that $\lim_{j\to \infty}I_r^j(\Omega)\subset \I_r(\Omega)$. If $\lim_{j\to\infty}I_r^j(\Omega)$ is empty, this trivially holds. If not, we have to show that $\bigcap_{j=1}^\infty I_r^j(\Omega)$ is $r$-robustly controlled invariant within $\Omega$. For any $x\in \lim_{j\ra\infty}I_r^j(\Omega)$, $x\in I_r^j(\Omega)$ for all $j\ge 1$. It follows that there exists $p_j\in \M$ such that $f_{p_j}(x)\in (I_r^{j-1}(\Omega)\ominus\mathcal{B}_r)$ for all $j\ge 1$. Since $\M$ is finite, the sequence $\set{p_j}_{j=1}^\infty\subset \M$ must admit a constant subsequence. In other words, there exists $p\in\M$ such that $f_p(x)\in I_r^{j-1}(\Omega)$ for infinitely many $j\ge 1$. By monotonicity of the sequence $I_r^j(\Omega)$, this implies $f_p(x)\in \bigcap_{j=1}^{\infty}I_r^j(\Omega)=\lim_{j\ra\infty}I_r^j(\Omega)$. Hence $\lim_{j\ra\infty}I_r^j(\Omega)$ is a controlled invariant subset of $\Omega$.

Third, we use mathematical induction to prove $\I_r(\Omega)\subset \lim_{j\to \infty}I_r^j(\Omega)$, i.e., $\lim_{j\to \infty}I_r^j(\Omega)$ is maximal. We assume $\I_r(\Omega)\ominus\mathcal{B}_r\neq \varnothing$, otherwise $\I_r(\Omega)=\varnothing$, which means the conclusion trivially holds. For the base case $j=0$, we have $\I_r(\Omega)\subset I_r^0(\Omega)=\Omega$. Suppose $\I_r(\Omega)\subset I_r^j(\Omega)$. By (\ref{eq:invmargin}), for any $x\in (I_r^j(\Omega)\setminus I_r^{j+1}(\Omega))$, $f_p(x)\notin (I_r^{j}(\Omega)\ominus\mathcal{B}_r)$ for all $p\in \M$, which also means $f_p(x)\notin (\I_r(\Omega)\ominus\mathcal{B}_r)$. By definition of $\I_r(\Omega)$, $x\notin \I_r(\Omega)$. It follows that $\I_r(\Omega)\subset I_r^{j+1}(\Omega)$. Hence, $\I_r(\Omega)\subset \bigcap_{j=1}^\infty I_r^j(\Omega)= \lim_{j\to \infty}I_r^j(\Omega)$. This completes the proof.
\end{proof}

Based on Algorithm \ref{alg:cpre} for computing pre-images, we have the following algorithm for finding controlled invariant sets.

\begin{algorithm}[htbp]
  \caption{Inner Approximation of $\I(\Omega)$}
  \label{alg:innerc}
  \begin{algorithmic}[1]
  \Require $\set{[f_p]}_{p\in\P},\Omega, \varepsilon$
    \State $X\leftarrow [\Omega],\widetilde{Y}\leftarrow\Omega, Y\leftarrow\varnothing$

    \While{$Y \neq \widetilde{Y}$}
    
    \State $Y=\widetilde{Y}$
    \State $[\underline{X}, \Delta X, X_c,\C]=\text{CPre}(\set{[f_p]}_{p\in\P},Y,X,\varepsilon)$
    \State $\widetilde{Y}\leftarrow \cup_{[x]\in\underline{X}}[x]$
    \State $X\leftarrow \underline{X}$
    
    \EndWhile	
    \State  \Return $Y,\C$
   \end{algorithmic}
\end{algorithm}

\begin{theorem}\em \label{prop:innerthm}
  Let $\Omega\subset \Real^n$ be a compact set and Assumption \ref{asp:f} holds. Denote by $\underline{Y}^\varepsilon$ the output of Algorithm \ref{alg:innerc} for a given precision $\varepsilon$. Then Algorithm \ref{alg:innerc} terminates in a finite number of steps. Furthermore, if $\rho_1\varepsilon\leq r$, then the following conclusions hold:
\begin{enumerate}[label=(\roman*)]
\item \label{itm:inner1} If $\underline{Y}^\varepsilon=\varnothing$, then system (\ref{eq:sw}) does not have a $r$-robustly controlled invariant set contained in $\Omega$;
\item \label{itm:inner2} if $\underline{Y}^\varepsilon\neq\varnothing$, then $\underline{Y}^\varepsilon$ is controlled invariant, and
\end{enumerate}
  \begin{equation}
    \label{eq:cre}
    \I_r(\Omega)\subset \underline{Y}^\varepsilon \subset \I(\Omega).
  \end{equation}
\end{theorem}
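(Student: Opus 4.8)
The plan is to handle the three assertions in turn, writing $Y_j$ for the value of the set $Y$ at the start of the $j$th pass through the while-loop (so that $Y_0=\Omega$) and $\underline{X}_j,\Delta X_j,X_{c,j}$ for the corresponding outputs of CPre, whose target at pass $j$ is $Y_j$. Termination holds regardless of the margin condition; I would argue it exactly as in the proof of Theorem \ref{prop:outer}. The search domain handed to CPre at pass $j+1$ is the box collection $\underline{X}_j$, whose union is $Y_j$, so every box placed in $\underline{X}_{j+1}$ is a sub-box of $Y_j$ and hence $Y_{j+1}=\bigcup_{[x]\in\underline{X}_{j+1}}[x]\subset Y_j$; thus $\{Y_j\}$ is non-increasing. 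Since bisection only halves the widest edge and is never applied once a box has width below $\varepsilon$, every box ever produced is a dyadic sub-box of $\Omega$ of width at least $\varepsilon/2$, and there are finitely many of these inside the compact set $\Omega$. Consequently $\{Y_j\}$ takes values in a finite lattice of finite unions of boxes, a non-increasing sequence there becomes constant after finitely many steps, and the guard $Y\neq\widetilde Y$ then fails.

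Next I would dispose of the controlled invariance in \ref{itm:inner2} and the upper inclusion $\underline{Y}^\varepsilon\subset\I(\Omega)$, neither of which needs $\rho_1\varepsilon\le r$. At termination $\widetilde Y=Y=\underline{Y}^\varepsilon$, and $\widetilde Y=\bigcup_{[x]\in\underline{X}}[x]$, where each $[x]\in\underline{X}$ was accepted by CPre because $[f_p]([x])\subset Y$ for some $p\in\M$. The inclusion property of $[f_p]$ gives $f_p(x)\in Y$ for every $x\in[x]$, so every point of $\underline{Y}^\varepsilon$ lies in $\pre(\underline{Y}^\varepsilon)$; that is, $\underline{Y}^\varepsilon\subset\pre(\underline{Y}^\varepsilon)$, and Proposition \ref{prop:invc} makes $\underline{Y}^\varepsilon$ controlled invariant (the empty case being trivial). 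Because every box is a sub-box of $\Omega$ we also have $\underline{Y}^\varepsilon\subset\Omega$, so the maximality in Definition \ref{def:maxinvc} yields $\underline{Y}^\varepsilon\subset\I(\Omega)$.

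The remaining inclusion $\I_r(\Omega)\subset\underline{Y}^\varepsilon$ is where the margin hypothesis is essential, and I expect it to be the crux. I would prove $\I_r(\Omega)\subset Y_j$ for every $j$ by induction, the base case being $\I_r(\Omega)\subset\Omega=Y_0$. For the step, fix $x\in\I_r(\Omega)\subset Y_j$. Since $\I_r(\Omega)$ is $r$-robustly controlled invariant there is $p\in\M$ with $\mathcal{B}_r(f_p(x))\subset\I_r(\Omega)\subset Y_j$, and as $\rho_1\varepsilon\le r$ this already gives $\mathcal{B}_{\rho_1\varepsilon}(f_p(x))\subset Y_j$. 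Now consider the box $[x]$ of the final subdivision produced by CPre at pass $j+1$ that contains $x$ (the subdivision refines the search domain $\underline{X}_j$, whose union is $Y_j$, so $x$ lies in exactly one such box). It cannot lie in $X_{c,j+1}$, because $f_p(x)\in[f_p]([x])$ and $f_p(x)\in Y_j$ force $[f_p]([x])\cap Y_j\neq\varnothing$; and it cannot lie in $\Delta X_{j+1}$, because such a box has width below $\varepsilon$, whence $w([f_p]([x]))\le\rho_1 w([x])<\rho_1\varepsilon$ by \eqref{eq:lipschitz2}, and since $f_p(x)\in[f_p]([x])$ this yields $[f_p]([x])\subset\mathcal{B}_{\rho_1\varepsilon}(f_p(x))\subset Y_j$, which would have placed $[x]$ in $\underline{X}_{j+1}$. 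Hence $[x]\in\underline{X}_{j+1}$ and $x\in Y_{j+1}$, closing the induction; evaluating at the terminal index gives $\I_r(\Omega)\subset\underline{Y}^\varepsilon$.

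Finally, \ref{itm:inner1} is an immediate corollary: every $r$-robustly controlled invariant subset of $\Omega$ is contained in $\I_r(\Omega)\subset\underline{Y}^\varepsilon$, so $\underline{Y}^\varepsilon=\varnothing$ precludes the existence of any such set. The delicate point throughout the last step is the bookkeeping that the robust margin $r$ dominates the one-step interval error $\rho_1\varepsilon$ precisely where the CPre classifier could otherwise discard a genuinely invariant box; the key estimate to get right is that the $r$-deep interior guaranteed by robust invariance always contains the image box $[f_p]([x])$ once its width drops below $\rho_1\varepsilon$, which is what rules out membership in $\Delta X_{j+1}$.
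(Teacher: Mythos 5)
Your proof is correct, and while its skeleton (termination through finiteness of the box lattice, invariance of the terminal set, induction over the passes for the lower inclusion) parallels the paper's, the crux $\I_r(\Omega)\subset\underline{Y}^\varepsilon$ is established by a genuinely different route. The paper argues indirectly: it sets up the conceptual iterates $X_j=I_r^j(\Omega)$ of Algorithm \ref{alg:mcis1}, uses Theorem \ref{prop:cpre} to sandwich each pass, $\pre(Y_j\ominus\mathcal{B}_{\rho_1\varepsilon})\cap Y_j\subset Y_{j+1}\subset \pre(Y_j)\cap Y_j$, proves $X_j\subset Y_j$ by induction (this is where $\rho_1\varepsilon\le r$ enters, via monotonicity of the Pontryagin difference), and then invokes Proposition \ref{prop:convrb} to conclude $\I_r(\Omega)\subset X_j\subset Y_j$. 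You instead prove $\I_r(\Omega)\subset Y_j$ directly from the definitional properties of $\I_r(\Omega)$ ($r$-robust invariance plus maximality), re-deriving inline, at the level of individual boxes, exactly the half of Theorem \ref{prop:cpre} that is needed: a box of width below $\varepsilon$ containing a point of $\I_r(\Omega)$ has its image box trapped in the $r$-deep interior of $Y_j$, so the classifier must accept it. Your route is more self-contained (it bypasses both Theorem \ref{prop:cpre}, whose lower inclusion rests on a boundary/Pontryagin-difference argument, and Proposition \ref{prop:convrb}) and it pinpoints precisely where the margin hypothesis is used; the paper's route is shorter given its lemmas and yields the extra relation $I_r^j(\Omega)\subset Y_j\subset I^j(\Omega)$ tying the implemented algorithm to both conceptual iterations. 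Similarly, your part \ref{itm:inner2} reads controlled invariance off the acceptance test $[f_p]([x])\subset Y$ at the terminal pass, whereas the paper deduces it from $Y_{J}=Y_{J+1}\subset\pre(Y_J)\cap Y_J$; the two are equivalent. One cosmetic slip with no consequence: a point on a shared face lies in two boxes of the final subdivision, not ``exactly one,'' but your argument applies verbatim to any box containing it.
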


\begin{proof}
  Again, we show that Algorithm \ref{alg:innerc} terminates in finite steps first. Similar to the argument for Algorithm \ref{alg:outerc}, we denote by $\{Y_j\}$ ($j\geq 0$) the resulting non-increasing sequence with $Y_0=\Omega$, and $Y_{j+1}=Y_j\setminus (\Delta X_j\cup X_{c,j})$. If $Y_{j+1}\neq Y_j$, then $(\Delta X_j\cup X_{c,j})\neq \varnothing$, and $\{Y_j\}$ is a strictly decreasing sequence. Under a given precision $\varepsilon$, for all $j\geq 0$, $Y_j$ is represented by a finite number of intervals, as a result of the compactness of $\Omega$. Then there must exists an $N\in \Z_{>0}$ such that $Y_N=\varnothing$, which results in $Y_{N+1}=\varnothing$. This means that at the beginning of the $(N+2)$ th iteration, $Y=\widetilde{Y}$ is satisfied. Hence, this algorithm will terminate in finite iterations.

To prove \ref{itm:inner1} and \ref{itm:inner2}, suppose that $\{X_j\}$ ($j\geq 0$) is the sequence generated by Algorithm \ref{alg:mcis1}. According to the mapping (\ref{eq:mapir}), we have
\begin{equation*}
  \begin{aligned}
    X_0&=\Omega,\\
    X_{j+1}&=(\pre(X_{j}\ominus\mathcal{B}_r)\cap X_{j}).
  \end{aligned}
\end{equation*}
By Theorem \ref{prop:cpre},
\begin{equation*}
  \begin{aligned}
    Y_0&=\Omega,\\
    Y_{j+1}&\supset (\pre(Y_j\ominus\mathcal{B}_{\rho_1 \varepsilon})\cap Y_j),\\
    Y_{j+1}&\subset (\pre(Y_j)\cap Y_j).
  \end{aligned}
\end{equation*}
It is clear that both $\{X_j\}$ and $\{Y_j\}$ are non-increasing sequences. Considering that $X_0=Y_0=\Omega$, $X_1=(\pre(X_0\ominus\mathcal{B}_r)\cap X_0)$, $(\pre(Y_0\ominus\mathcal{B}_{\rho_1 \varepsilon}) \cap Y_0)\subset Y_1\subset\pre(Y_0)$, we have $X_1\subset Y_1$. Suppose that $X_j\subset Y_j$ for all $j\geq 1$. Since $\rho_1\varepsilon\leq r$, we have $X_{j+1}\subset Y_{j+1}$. This means that for all $j\geq 0$, $X_j\subset Y_j$.

If $\underline{Y}^\varepsilon=\varnothing$, then there exists some integer $N>0$ such that $Y_N=\varnothing$. It follows that $X_N=\varnothing$, and $\I_r(\Omega)=\bigcap_{j=1}^\infty I_r(\Omega)=\varnothing$. Hence, \ref{itm:inner1} is proved.

If $\underline{Y}^\varepsilon\neq\varnothing$, then there exists an integer $J>0$ such that $\underline{Y}^\varepsilon=Y_J=Y_{J+1}\supset X_J\supset (\bigcap_{j=1}^\infty I_r(\Omega))=\I_r(\Omega)$. Since $Y_J=Y_{J+1}\subset (\pre(Y_J)\cap Y_{J})$, we have $Y_J\subset \pre(Y_J)$, i.e., $Y_J$ is controlled invariant by definition. Hence $Y_J\subset \I(\Omega)$. That completes the proof.
\end{proof}

\begin{remark}\em
  Theorem \ref{prop:innerthm} serves as a rule for choosing the precision parameter $\varepsilon$ if the robust invariance margin is known in prior. For switched linear systems, one can refer to quadratic Lyapunov functions for estimations of their robust invariance margins. For switched nonlinear systems, however, it is nontrivial to obtain such margins either analytically or numerically because of the generality of the dynamics. In practice, it is not necessary to know this margin before computation. As implied in Theorem \ref{prop:innerthm}, one can choose a sufficiently small $\varepsilon$ while satisfying the computational performance requirement. If the number of intervals is a major concern, a better way is to start with a larger $\varepsilon$ and iteratively reduce it until Algorithm \ref{alg:innerc} returns a nonempty result. This way of iteratively decreasing $\varepsilon$ and recomputing controlled invariant sets can also be considered as a numerical method for evaluating robust invariance margins.

Theorem \ref{prop:innerthm} is applicable to general nonlinear dynamics. A special convergence result involving compact and convex sets containing the origin for linear systems can be found in \cite[Theorem 3.1]{Blanchini94}. 
\end{remark}

In the two examples given below, we illustrate how Algorithm \ref{alg:innerc} and Theorem \ref{prop:innerthm} are applied in controlled invariant sets computation. The second example shows the performance of our method applied to nonlinear systems.

\begin{example}\em \label{eg:lti2}
  Consider again the LTI system in Example \ref{eg:lti}. The goal is to compute an invariant inner approximation of the maximal positively invariant set within the same compact set $\Omega$. 

To estimate the robust invariance margin, we use an invariant set candidate $\{ x\in\Real^2 \sv x^TPx\leq \gamma^2, \gamma >0 \}$, which is the level set of the quadratic Lyapunov function $V(x)=x^TPx$. The symmetric positive definite matrix $P$ is determined by the discrete Lyapunov equation $A^TPA-P+Q=0$, where the matrix $Q$ is symmetric and positive definite, and $\gamma$ is chosen such that the candidate is inner tangent to $\Omega$. Setting $Q=\text{diag}(0.1,0.1)$, we get the robust invariance margin $r=0.0031$. Taking $\rho_1=1.196$, which is the Euclidean norm of $A$, the precision parameter $\varepsilon$ is set to $0.001$ according to Theorem \ref{prop:innerthm}. The resultant inner approximation is shown in Figure \ref{fig:LTIinner}.
\begin{figure}[htbp]
  \centering
  \includegraphics[scale=0.65]{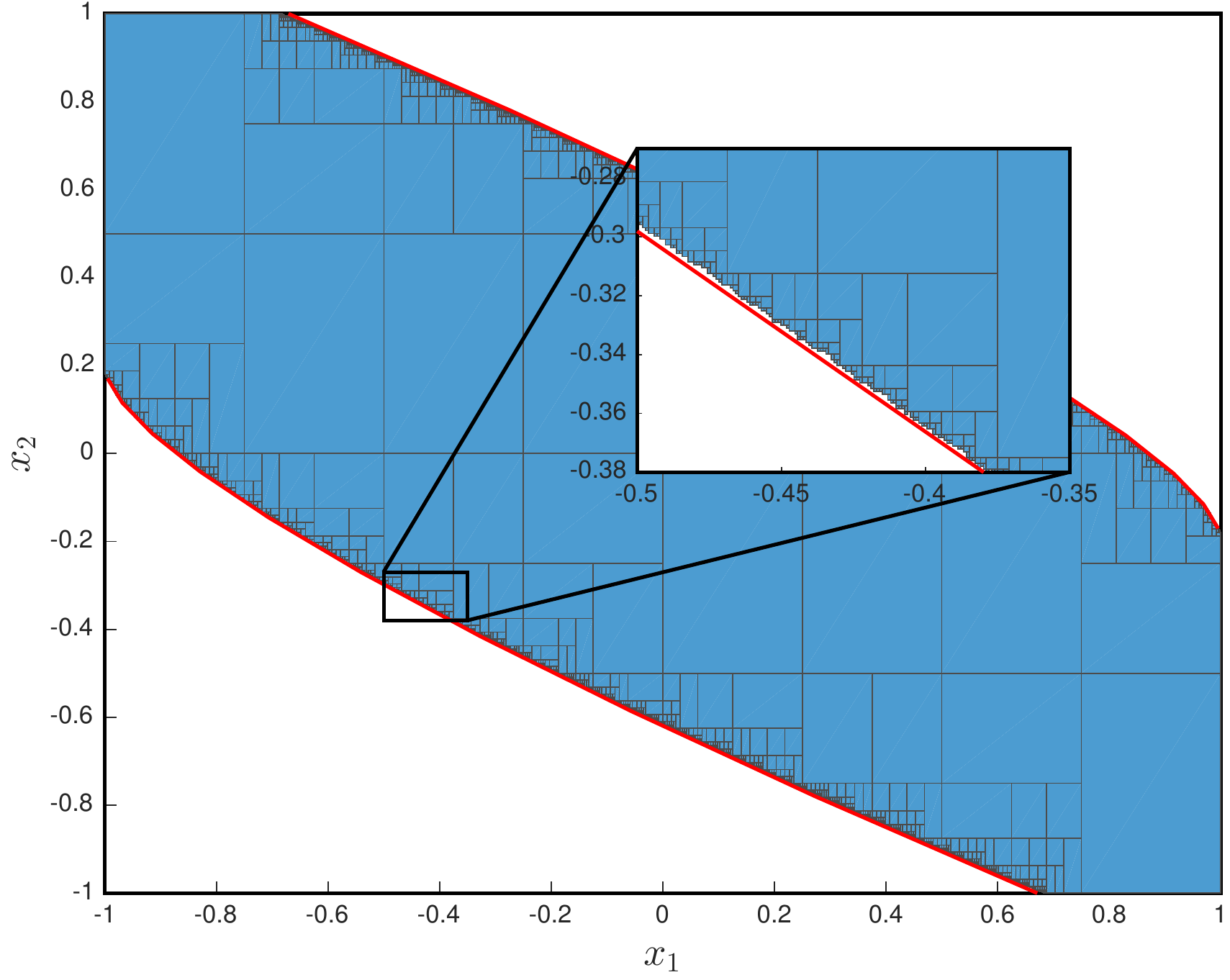}
  \caption{The inner approximation of $\I(\Omega)$ for Example \ref{eg:lti} with $\varepsilon=0.001$, which is marked by the union of the blue intervals.}
  \label{fig:LTIinner}
\end{figure}

The estimation of the robust invariance margin is conservative, and in practice, the precision parameter can be set larger.

\end{example}

\begin{example}\em \label{eg:8p6}
  Consider a discrete-time version of a second-order nonlinear system taken from \cite[Example 8.6]{KhalilBook02} as follows:
\begin{equation*}
  \begin{aligned}
    x_1(k+1)&=x_1(k)+0.1x_2(k)\\
    x_2(k+1)&=-0.1x_1(k)+0.033x_1^3(k)+0.9x_2(k).
  \end{aligned}
\end{equation*}

It has three isolated equilibrium points at $(0,0)$, $(\sqrt{3},0)$ and $(-\sqrt{3},0)$. The region between the manifolds that pass through $(\sqrt{3},0)$ and $(-\sqrt{3},0)$ is the maximal positively invariant set, which is difficult to express analytically. Figure \ref{fig:8p6} displays the outer and inner approximations.
\begin{figure}[htbp]
  \centering
  \begin{subfigure}[c]{0.45\textwidth}
    \centering
    \includegraphics[width=\linewidth]{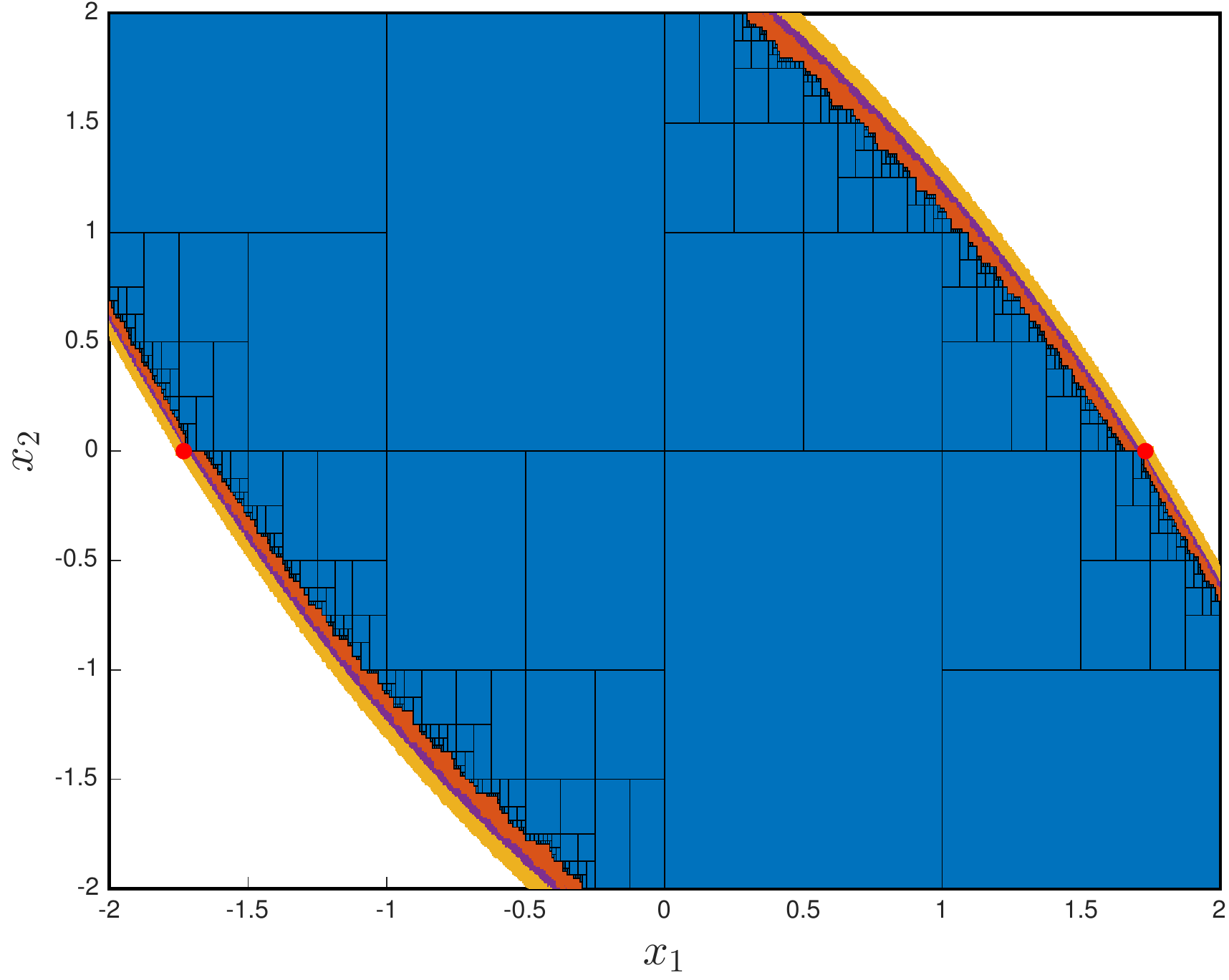}
  \end{subfigure}
  \begin{subfigure}[c]{0.35\textwidth}
    \centering
    \vspace{0pt}
    \begin{tabular}{c|c}
      \hline
      $\varepsilon$ & AI/AO \\
            \hline
      0.004 & 0.979 \\
      \hline
      0.01 &  0.969 \\
      \hline
      0.03 &  0.949 \\
      \hline
    \end{tabular}
  \end{subfigure}

  \caption{Approximations of $\I(\Omega)$ for example \ref{eg:8p6}. Regions from outside to inside: outer approximation with $\varepsilon=0.004$, inner approximations with $\varepsilon=0.004,0.01,0.03$, respectively. Right table: ``AI/AO'' denotes the ratio between the areas of an inner approximation and the outer approximation area. }
  \label{fig:8p6}
\end{figure}

The result shows that the inner approximation obtained using a smaller precision parameter approximates the exact maximal positively invariant set more precisely.

\end{example}

The robustly controlled invariance condition is critical for the successful application of Algorithm \ref{alg:innerc}. Consider a discrete-time system $x(t+1)=A_\theta x(t)$, where
\begin{equation*}
  A_\theta=\begin{bmatrix} \cos{\theta} & -\sin{\theta}\\ \sin{\theta} & \cos{\theta} \end{bmatrix}.
\end{equation*}
Under this dynamics, every state $x\in \Real^2$ moves on a circle centered at the origin. It is evident that the robust invariance margin is 0, and Algorithm \ref{alg:innerc} will return an empty set.

\subsection{Complexity analysis}
For both Algorithm \ref{alg:outerc} and \ref{alg:innerc}, the number of iterations varies for different systems, depending on the invariance property of the dynamics on a given area. Hence, we only analyze the complexity of Algorithm \ref{alg:cpre} in this section. Denote by $|\Omega|$ the number of the intervals that represents $\Omega$ and $|\M|$ the number of switching modes. Computing the interval inclusion function $[f_p]$ takes a constant time, and checking intersection has an average complexity of $\mathcal{O}(\log(|\Omega|))$ if a search-based algorithm is used. In the best case, i.e., no subdivisions are conducted, the overall complexity of Algorithm \ref{alg:cpre} is $\mathcal{O}(|\M||\Omega|\log(|\Omega|))$. In the worst case, the set $\Omega$ is subdivided into $(w([\Omega])/\varepsilon+1)^n$ intervals \cite{JaulinBook01}. Let $|\Omega|_{\text{max}}=(w([\Omega])/\varepsilon+1)^n$. Then the worst-case complexity is $\mathcal{O}(|\M||\Omega|_{\text{max}}\log(|\Omega|_{\text{max}}))$.

\section{Extraction of invariance controller }
This section is dedicated to Problem \ref{pb:invctl} \ref{itm:pb2}, i.e., the construction of an invariance controller using the information recorded by Algorithm \ref{alg:innerc}.

For the sake of simplicity, we denote by $f_p(X)$ the image of $X\subset \Real^n$ under the dynamics $f_p$ for all $p\in \M$. Moreover, Let $\Y=\{Y_1,Y_2,\cdots, Y_N\}$ and $\C=\{C_1,C_2, \cdots, C_N\}$ be the outputs of Algorithm \ref{alg:innerc}, where $N\in \Z_{>0}$ denotes the number of intervals and 
$C_i$ contains a set of modes such that $\forall p\in C_i$, $f_{p}(Y_i)\subset (\bigcup_{i\in N}Y_i)$.

To explicitly extract an invariance controller, we introduce the following definition.

\begin{definition} \label{def:part} \em
Given a set $\Omega \subset\Real^n$, a finite collection of sets 
$$\P=\{P_1,P_2,\cdots,P_N\},$$ 
is said to be a \emph{partition} of $\Omega$, if the following conditions are satisfied:
  \begin{enumerate}
  \item $P_i \subset \Omega$, for all $i\in\set{1,\cdots,N}$;
  \item $\mathrm{int}(P_i) \cap \mathrm{int}(P_j)=\varnothing$, for all  $i,j\in\set{1,\cdots,N}$;
  \item $\Omega \subset \bigcup_{i=1}^N P_i$.
  \end{enumerate}
Each element $P_i$ of the partition $\P$ is called a \emph{cell}.
\end{definition}

If the dynamics of each subsystem of (\ref{eq:sw}) is Lipschitz continuous on $\Omega$, we can show, in the following theorem, the existence of a partition-based invariance controller on $\Omega$.

\begin{theorem}\em \label{thm:exist}
  Let $\Omega\subset \Real^n$ be compact. Suppose that Assumption \ref{asp:f} holds on $\Omega$. If $\Omega$ is a $r$-robustly ($r>0$) controlled invariant set, then there exists a partition $\P=\{P_1,P_2,\cdots,P_N\}$ of $\Omega$ and an invariance controller $c: \Real^n\to 2^\M$ with
\begin{equation}
  \label{eq:cmap}
  c(x)=\bigcup_{i\in N}\psi_{P_i}(x), \quad x\in \Omega.
\end{equation}
The map $\psi_{P_i}$ is given by
\begin{equation}
    \label{eq:cell}
    \psi_{P_i}(x)=
    \begin{cases}
      \varnothing & \quad \text{if } x\notin P_i,\\
      \{p_{i_k}\}  & \quad \text{if } x\in P_i,\\
    \end{cases}
  \end{equation}
where $p_{i_k}\in \M$ for $i\in\set{1,\cdots,N}$, $k\in \{1,\cdots, |\M|\}$.
\end{theorem}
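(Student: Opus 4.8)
The plan is to turn the two quantitative hypotheses—the robust invariance margin $r>0$ and the Lipschitz constant $\rho_1$ from Assumption \ref{asp:f}—into a single uniform length scale $\delta$, partition $\Omega$ into cells of diameter at most $\delta$, assign one mode per cell so that that mode is \emph{one-step safe} on the whole cell, and then verify inductively that the resulting controller keeps every conforming trajectory in $\Omega$.

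First I would extract $\delta$. Since $\Omega$ is $r$-robustly controlled invariant, Definition \ref{def:rbmargin} gives $\Omega\subset\pre(\Omega\ominus\mathcal{B}_r)$, so for each $x\in\Omega$ there is a mode $p\in\M$ with $f_p(x)\in\Omega\ominus\mathcal{B}_r$; by the definition of the Pontryagin difference this means $f_p(x)+b\in\Omega$ for every $\|b\|\le r$. Setting $\delta:=r/\rho_1$ and using $\|f_p(y)-f_p(x)\|\le\rho_1\|y-x\|$ from Assumption \ref{asp:f}, I would note that whenever $\|y-x\|\le\delta$ the displacement $b=f_p(y)-f_p(x)$ satisfies $\|b\|\le\rho_1\delta=r$, hence $f_p(y)\in\Omega$. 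Thus a single mode attached to $x$ renders the entire $\delta$-neighborhood of $x$ one-step invariant.

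Next I would construct the partition. Cover the compact set $\Omega$ by finitely many boxes $[x]_i$ of width $w([x]_i)\le\delta$ with pairwise disjoint interiors (a grid), discard those disjoint from $\Omega$, and set $P_i:=[x]_i\cap\Omega$; these satisfy the three requirements of Definition \ref{def:part}. For each nonempty cell I would pick a representative $x_i\in P_i\subset\Omega$, use robust invariance to choose a mode $p_i$ with $f_{p_i}(x_i)\in\Omega\ominus\mathcal{B}_r$, and define $\psi_{P_i}$ to return $\{p_i\}$ on $P_i$ and $\varnothing$ outside, exactly as in (\ref{eq:cell}). Because every $y\in P_i$ lies within $\|y-x_i\|\le w([x]_i)\le\delta$ of $x_i$, the estimate above yields $f_{p_i}(y)\in\Omega$ for all $y\in P_i$.

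Finally I would show that $c(x)=\bigcup_i\psi_{P_i}(x)$ is an invariance controller. The key point is that \emph{every} mode in $c(x)$ is safe at $x$: if $p\in c(x)$ then $x\in P_i$ for some $i$ with $p=p_i$, and the cell-wise one-step invariance gives $f_p(x)\in\Omega$. Hence for any $x_0\in\Omega$ and any switching signal conforming to $c$, the one-step image again lies in $\Omega\subset\bigcup_iP_i$, so the argument repeats and by induction $x_k\in\Omega$ for all $k\ge0$, which is stronger than the ``for some initial state'' requirement in the definition. I expect the main obstacle to be precisely the bookkeeping at cell boundaries: a boundary point belongs to several cells, so $c(x)$ is a union of several distinct singleton modes, and the definition quantifies over \emph{all} conforming signals, so one must confirm that each of these modes is one-step safe. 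This is exactly what the uniform choice of $\delta$ and the ``for all $y\in P_i$'' invariance of each cell guarantee, and it is what makes the union-valued controller (\ref{eq:cmap}) well behaved rather than merely admitting one good mode per state.
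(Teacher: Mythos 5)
Your proof is correct and follows essentially the same route as the paper's: both turn the robust margin $r$ and the Lipschitz constant $\rho_1$ into the uniform scale $\delta = r/\rho_1$, use compactness of $\Omega$ to obtain a finite cover by cells of diameter at most $\delta$, and assign each cell a single mode that is one-step safe on the whole cell (the paper builds its partition from successive set differences of $\delta$-balls where you use a grid intersected with $\Omega$, an immaterial difference). If anything, your closing induction---verifying that \emph{every} mode in the union $c(x)$ is one-step safe, including at points lying in several cells---is spelled out more carefully than the paper's one-line conclusion that trajectories are ``controlled into one of the cells.''
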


\begin{proof}
  We prove it by constructing a partition of $\Omega$ and the corresponding invariance controller.

Since $\Omega$ is a $r$-robustly ($r>0$) controlled invariant set, it satisfies (\ref{eq:invmargin}). Consider an arbitrary point $x\in\Omega$, there exists a switching mode $p\in \M$ such that $f_p(x)\in (\Omega\ominus \mathcal{B}_r)$. Under Assumption \ref{asp:f}, any state $x'\in \mathcal{B}_\delta(x)$ ($\delta>0$) satisfies
\begin{equation*}
  \|f_p(x')-f_p(x)\| \leq \rho_1\|x'-x\|\leq \rho_1\delta,
\end{equation*}
for all $p\in\M$.

Let $\delta< r/\rho_1$. Then $\|f_p(x')-f_p(x)\| <r$ for all $x'\in \mathcal{B}_\delta(x)$. It follows that any point in $\mathcal{B}_{\delta}(x)$ can be controlled inside $\Omega$.

Given that $\Omega$ is a compact set, by Borel-Lebesgue covering theorem, there exists a finite subcovering $\set{\Omega_i}$, $i=1,2,\cdots, N$, of any open covering of $\Omega$ such that it covers $\Omega$. Then we can select a finite set of points $\{x_i\}_{i=1}^N \in \Omega$ such that 
\begin{equation*}
  \Omega \subset \left (\bigcup_{i=1}^N \mathcal{B}_{\delta}(x_i) \right ).
\end{equation*}

Now, we construct a partition as follows:
\begin{equation*}
  \begin{aligned}
    P_1&=\mathcal{B}_{\delta}(x_1),\\
    P_2&=\mathcal{B}_{\delta}(x_2) \setminus \mathcal{B}_{\delta}(x_1),\\
    \cdots,\\
    P_N&=\mathcal{B}_{\delta}(x_N) \setminus \left(\bigcup_{i=1}^{N-1} \mathcal{B}_{\delta}(x_i)\right).
  \end{aligned}
\end{equation*}

Assume that $p_i\in \M$ renders $f_{p_i}(x_i)\in (\Omega\ominus \mathcal{B}_r)$. Using the map defined as
\begin{equation*}
    \psi_{P_i}= 
    \begin{cases}
      \varnothing & \quad \text{if } x\notin P_i,\\
      p_i  & \quad \text{if } x\in P_i,\\
    \end{cases}
  \end{equation*}
any state of the resulting sequence $\set{x_k}_{k=0}^\infty$ will be controlled into one of the cells of the partition $\P$, which proves the statement.
\end{proof}

An advantage of Algorithm \ref{alg:innerc} is that the output set $\Y$ naturally defines a partition of $\Omega$. This partition, if not empty, covers the one-step forward reachable set of each cell of $\Y$, guaranteeing that the invariance control problem is feasible.

\begin{proposition} \em \label{prop:invctlr}
  Assume that Algorithm \ref{alg:innerc} terminates with a nonempty set $\Y$. Then the invariance controller $c$ in the form of (\ref{eq:cmap}) with $\P=\Y$ and
  \begin{equation}
    \label{eq:cellinv}
    \psi_{Y_i}= 
    \begin{cases}
      \varnothing & \quad \text{if } x\notin Y_i,\\
      C_i & \quad \text{if } x\in Y_i,\\
    \end{cases}
  \end{equation}
renders the closed-loop system invariant with respect to $\Omega$.
\end{proposition}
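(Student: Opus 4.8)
The plan is to verify directly that the controller $c$ defined by (\ref{eq:cmap}) and (\ref{eq:cellinv}) meets the definition of an invariance controller: starting from any $x_0\in\bigcup_{j=1}^N Y_j$, every switching signal conforming to $c$ keeps the trajectory inside $\Omega$ for all time. First I would record the two structural facts produced by Algorithm \ref{alg:innerc} that drive the argument. Since the cells $Y_i$ are intervals obtained by bisection inside $\text{CPre}$, they have pairwise disjoint interiors and their union is exactly the returned set $\underline{Y}^\varepsilon$; hence $\Y$ is a partition of $\underline{Y}^\varepsilon$ in the sense of Definition \ref{def:part}, so taking $\P=\Y$ in (\ref{eq:cmap}) is legitimate, and $c:\Real^n\to 2^\M$ is a well-defined switching controller. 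Moreover, by Theorem \ref{prop:innerthm} the returned set is nonempty, controlled invariant, and satisfies $\underline{Y}^\varepsilon\subset\I(\Omega)\subset\Omega$, so the whole union of cells lies inside $\Omega$.

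The heart of the argument is the invariance property recorded in $\C$. At termination the fixed-point condition $Y=\widetilde{Y}$ holds, so the set $Y$ passed to the final call of $\text{CPre}$ equals $\underline{Y}^\varepsilon=\bigcup_{j=1}^N Y_j$. By the branch-and-prune test in Algorithm \ref{alg:cpre}, a mode $p$ is placed in $C_i$ precisely when $[f_p](Y_i)\subset Y$; combining this with the inclusion property $f_p(Y_i)\subset [f_p](Y_i)$ from Definition \ref{def:intf}\ref{itm:intf1} yields
\begin{equation*}
  f_p(Y_i)\subset \bigcup_{j=1}^N Y_j,\quad \text{for all } p\in C_i.
\end{equation*}
This single inclusion is the only ingredient needed for the forward step.

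With these facts in hand, I would close the loop by induction on $k$. Fix any $x_0\in\bigcup_j Y_j$ and any switching signal $\sigma=\{p_k\}$ conforming to $c$. Assuming $x_k\in\bigcup_j Y_j$, the point $x_k$ lies in some cell $Y_i$, and by (\ref{eq:cmap})--(\ref{eq:cellinv}) we have $c(x_k)=\bigcup_{i:\,x_k\in Y_i}C_i$, which is nonempty. Conformance gives $p_k\in C_i$ for some cell $Y_i$ containing $x_k$, and the displayed inclusion then forces $x_{k+1}=f_{p_k}(x_k)\in f_{p_k}(Y_i)\subset\bigcup_j Y_j$. Therefore $x_k\in\bigcup_j Y_j\subset\Omega$ for all $k\ge 0$, which is exactly the required invariance; since $\Y$ is nonempty, at least one admissible initial state exists.

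I expect the only genuinely delicate point to be the treatment of boundary states that belong to several cells at once. This is handled automatically: because $c(x)$ is the \emph{union} of the $C_i$ over all cells containing $x$, every admissible mode at such a point is drawn from some $C_i$ with $x\in Y_i$, and the recorded inclusion $f_p(Y_i)\subset\bigcup_j Y_j$ applies to precisely that cell. So multi-cell membership is harmless and no separate nonemptiness check for $c(x)$ is needed. The remaining bookkeeping — that the interval cells tile $\underline{Y}^\varepsilon$ and that $\underline{Y}^\varepsilon\subset\Omega$ — is routine given Theorem \ref{prop:innerthm}.
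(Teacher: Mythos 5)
Your proof is correct, but it takes a genuinely different route from the paper's. The paper disposes of this proposition in a single line: it asserts that the controller built from (\ref{eq:cellinv}) ``conforms to'' the controller constructed in the proof of Theorem~\ref{thm:exist}, whose ball-covering partition argument rests on the hypothesis that the set is $r$-robustly controlled invariant with $r>0$. You instead verify invariance directly from the algorithm: you extract, from the termination condition $Y=\widetilde{Y}$ together with the containment test in Algorithm~\ref{alg:cpre}, the inclusion $[f_p](Y_i)\subset\bigcup_{j}Y_j$ for every $p\in C_i$; you combine this with soundness of inclusion functions (Definition~\ref{def:intf}\ref{itm:intf1}) to obtain $f_p(Y_i)\subset\bigcup_j Y_j$; and you close with an induction on $k$, observing that each $C_i$ is nonempty so conforming signals exist and can always be extended, and that $\bigcup_j Y_j\subset\I(\Omega)\subset\Omega$ by Theorem~\ref{prop:innerthm}. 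One remark: the paper actually states your key inclusion as part of the setup of Section V (in the sentence introducing $\Y$ and $\C$, just before Definition~\ref{def:part}), so part of what you derive is taken there as given; deriving it from the fixed-point structure of Algorithm~\ref{alg:innerc} is nevertheless the substantive step. What your route buys: it is self-contained and uses only the hypothesis actually stated in the proposition (termination with nonempty $\Y$), whereas the paper's appeal to Theorem~\ref{thm:exist} implicitly imports a robust-invariance assumption that is not literally available here and leaves the correspondence between the two controllers unargued. What the paper's route buys: brevity, and a conceptual link between the computed controller and the existence result. Your handling of points lying in several cells --- taking the union of the $C_i$ over all cells containing the point, so that any conforming mode is covered by the inclusion for the cell that produced it --- matches the intended semantics of (\ref{eq:cmap}) and closes the only delicate gap.
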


\begin{proof}
  The switching controller composed of (\ref{eq:cellinv}) 
conforms to the one constructed in the proof of Theorem \ref{thm:exist}. Therefore, it renders the controlled system invariant with respect to $\Omega$.
\end{proof}

\begin{remark}\em
  There is a close connection between our method and abstraction-based approaches using transition systems. A transition system is a tuple $\T=(\Q,\Q_0,\A,\rightarrow_{\T})$, where $\Q$ ($Q_0$) is the set of (initial) states, $\A$ is the set of actions, and $\rightarrow_{\T} \, \subset \Q \times \A \times \Q$ is the transition relation. The closed-loop system with an invariance controller given by (\ref{eq:cmap}) and (\ref{eq:cellinv}) can be formulated as a controlled transition system. Suppose that $\Y$ and $\C$ are the corresponding sets obtained from Algorithm \ref{alg:innerc}. Then the abstraction can be constructed as
$\T_{\Y,\C}=(\Y,\Y,\M,\rightarrow_{\C})$, where $(Y_i,\,p,\,Y_j) \in \rightarrow_{{\C}}$ if and only if there is a switching mode $p\in C_i$ such that $Y_j\cap f_p(Y_i) \neq \varnothing$.
\end{remark}

\section{Examples}
\label{sec:exps}

In this section, we present three case studies. In each of these examples, subsystems do not have common equilibrium points. Hence, Lyapunov-based methods 
can not be applied. We compare the performance of our method with abstraction-based methods in terms of computational time and abstraction size.

\subsection{Boost DC-DC converter}
   Consider a typical boost DC-DC converter \cite{GirardPT10} with two switching modes and linear affine dynamics $\dot{x}=A_px+b$, where $p=1,2$ and
  \begin{equation*}
    \begin{aligned}
    A_1&=\begin{bmatrix} -\frac{r_l}{x_l} & 0 \\  0 & -\frac{1}{x_c(r_c+r_0)} \end{bmatrix},\\
    A_2&=\begin{bmatrix} -\frac{1}{x_l}(r_l + \frac{r_0r_c}{r_0+r_c}) & -\frac{r_0}{x_l(r_0+r_c)}\\
    \frac{r_0}{x_c(r_0+r_c)} & -\frac{1}{x_c(r_0+r_c)} \end{bmatrix},\\
  b&=\begin{bmatrix} \frac{v_s}{v_l} & 0 \end{bmatrix},
\end{aligned}
  \end{equation*}
where $x_c=70$ per unit (p.u.) and $r_c=0.005$p.u. denote the capacity and resistance of the capacitor; $x_l=3$p.u. and $r_l=0.05$p.u. denote the inductance and resistance of the indcutor; the load resistance and the source voltage are given by $ r_0=1$p.u. and $v_s=1$p.u., respectively. With sampling time $\tau_s=0.5$s, we use the discrete-time model $x(t+1)=e^{A_p\tau_s}x(t)+\int_0^{\tau_s} e^{\tau_s-s}b\,\mathrm{d}s$.

The invariance specification is given by $\Omega=[1.15,1.55]\times[1.09,1.17]$
. Running Algorithm \ref{alg:innerc} with $\varepsilon=0.001$, we obtain the maximal controlled invariant set represented by intervals and the corresponding feasible control inputs for each interval. The invariance controller constructed by Proposition \ref{prop:invctlr} serves as a least restrictive controller for the boost DC-DC converter. We apply the control policy that keeps the switching mode unchanged unless the state is going to leave $\Omega$. The state evolution of the closed-loop system with initial condition $x_0=[1.2,1.12]$
, shown in Figure \ref{fig:dcdc}, is confined to the controlled invariant set (intervals of gray color in (a)) of $\Omega$ as required.
\begin{figure}[htbp]
  \centering
  \begin{subfigure}[b]{0.5\textwidth}
    \centering
    \includegraphics[scale=0.65]{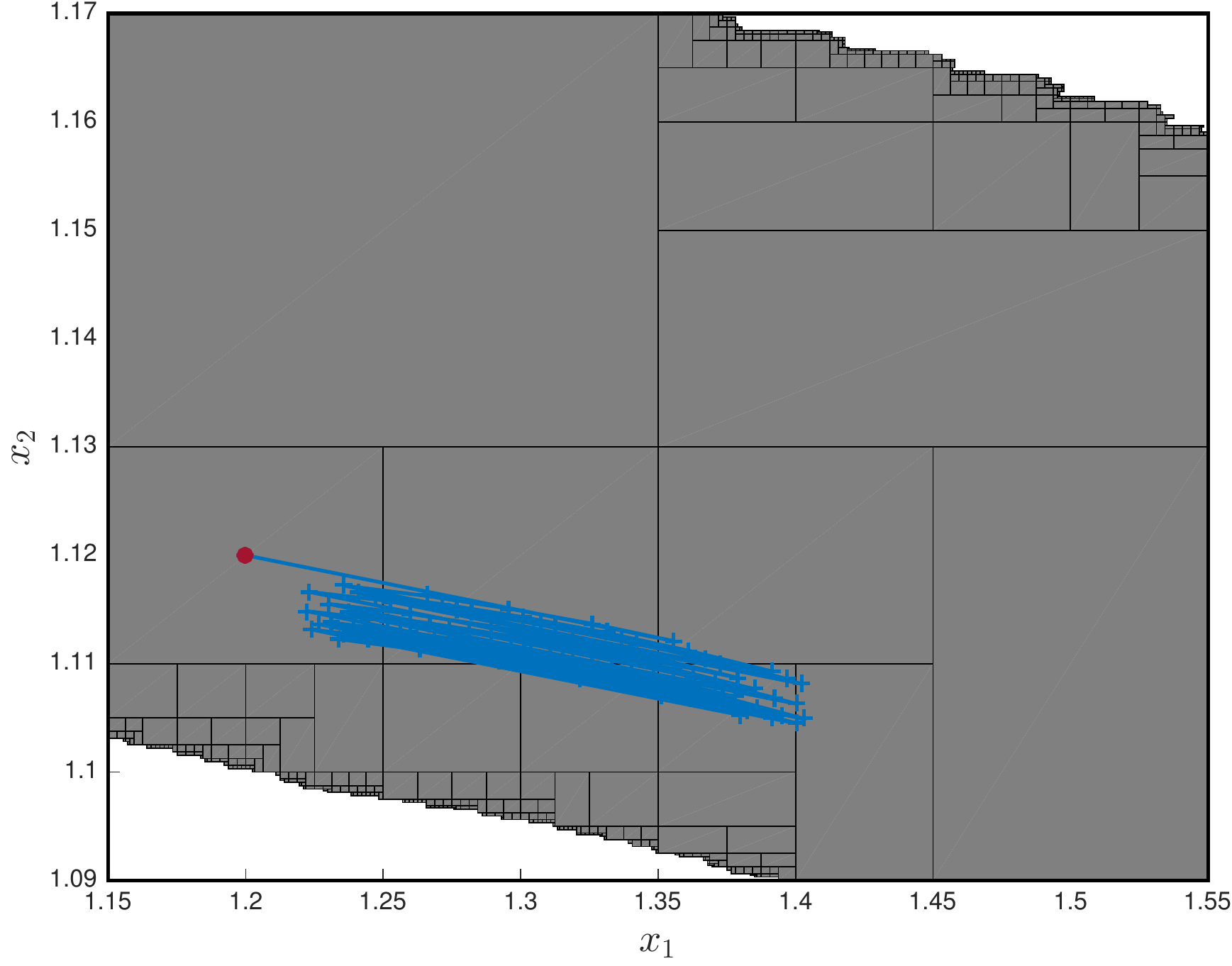}
    \caption{Closed-loop phase portrait.}
  \end{subfigure}
  \begin{subfigure}[b]{0.5\textwidth}
    \centering
    \includegraphics[scale=0.65]{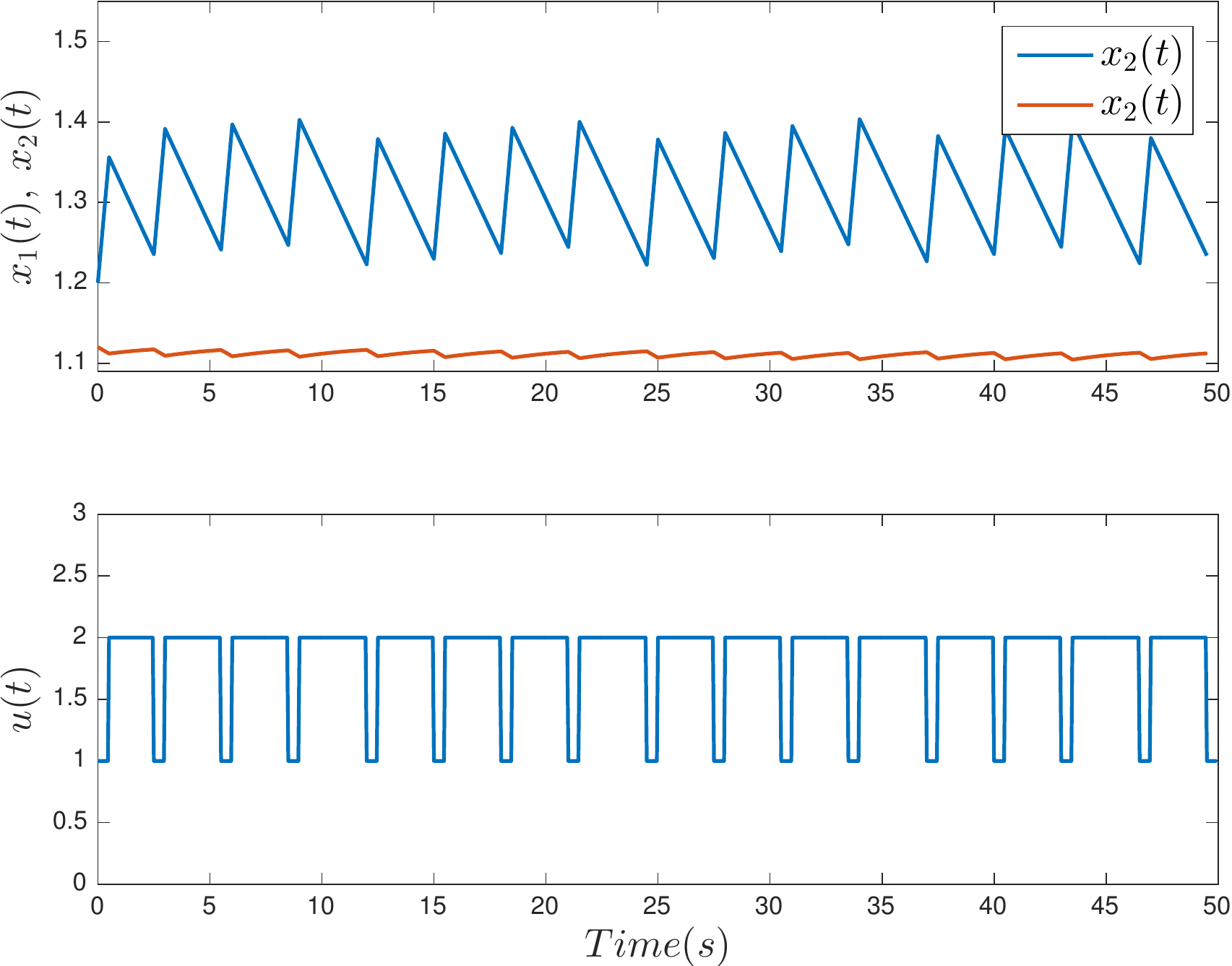}
    \caption{Time history of closed-loop states and control variables.}
  \end{subfigure}
  \caption{Invariance control results for boost DC-DC converter.}
  \label{fig:dcdc}
\end{figure}

Implemented in Matlab, our algorithm returns a nonempty invariant set and the corresponding invariance controller. We compare the run time of our algorithm with those of abstraction-based methods reported in \cite{RunggerZ16} in Table \ref{tbl:runtimes}, where ``$t_{abs}$'' stands for the time spent on computing abstractions, and ``$t_{syn}$'' is the time spent on control synthesis. In terms of efficiency, our algorithm outperforms other existing methods, except CoSyMA, which uses hash tables for controller synthesis and is more efficient than search-based methods.

\begin{table}[htbp]
\small \caption{Comparison of run times}
\label{tbl:runtimes}
\centering
\begin{tabular}{c||c|c|c}
\hline
 & CPU [GHz] & $t_{\text{abs}}$[s] & $t_{\text{syn}}$[s] \\
\hline
Pessoa \cite{MazoDT10} & i7 3.5 & 478.7 & 65.2 \\
\hline
SCOTS \cite{RunggerZ16} & i7 3.5 & 18.1 & 75.4 \\
\hline
CoSyMA \cite{MouelhiGG13} & N/A & N/A & 8.32\\
\hline
intvl  & i5 2.4 & 0 & 43.9%6.77
 \\
\hline
\end{tabular}
\end{table}

\subsection{Inverted pendulum on cart}
In this example, we aim to control an inverted pendulum on a cart. If the position of the cart need not to be controlled, the model can be simplified to
\begin{equation*}
  \begin{aligned}
    \dot{x}_1&=x_2,\\
    \dot{x}_2&=\frac{mgl}{J_t}\sin{x_1} -\frac{b}{J_t}x_2 +\frac{l}{J_t}\cos{x_1}u,
  \end{aligned}
\end{equation*}
where $J_t=J+ml^2$, $m$ the mass of the cart, $g$ is the acceleration of gravity, $l$ is the length of the pendulum, $J$ is the moment of inertia of the pendulum, $b$ is the coefficient of friction for cart, and $u$ is the force applied to the cart. The state $x_1$ denotes the angle of the pendulum to the upper vertical line $\varphi$ (rad), and $x_2$ is $\dot{\varphi}$ (rad/s). In our simulation, the parameters are taken as $m=0.2$kg, $g=9.8\text{m}/\text{s}^2$, $l=0.3$m, $J=0.006$kg$\text{m}^2$, $b=0.1\text{N}/\text{m}/\text{s}$.

Applying Euler's method with a sampling time $\tau_s=0.01$s, we obtain an approximated discrete-time model for the inverted pendulum. Assuming that the control variable $u$ takes values from a finite set $U$, this example can be seen as a discrete-time switched system, which is neither globally asymptotically stable nor incrementally asymptotically stable on $\Omega$. While approximate bisimilar models \cite{GirardPT10} are not applicable, a similar model (or non-deterministic abstraction) can still be constructed by a proper growth bound $\beta:\Real^n\times U \to \Real^n_{\geq 0}$ (see \cite{ZamaniPMT12} for the definition) with
\begin{equation*}
  \|f_p(x)-f_p(y)\|\leq \beta(\|x-y\|,u), \forall x,y\in \Real^n, \forall u\in U,
\end{equation*}
for estimating the distance between two trajectories.

For comparison of our method with abstraction-based methods, 
we choose a local growth bound $\beta(\eta,u)$:
\begin{equation*}
  \begin{bmatrix}
    0.5\eta_1+0.005\eta_2\\ (0.49\cos{\frac{\eta_1}{2}}+0.25|u|\sin{\frac{\eta_1}{2}})|\sin{(x_1+\frac{\eta_1}{4})}|+0.48\eta_2
  \end{bmatrix},
\end{equation*}
where $\eta=[\eta_1,\eta_2]$ is the grid width, $x_1$ is the center of the current grid.

Two invariance control specifications $\Omega_1=[-0.05,0.05]\times[-0.01,0.01]$ and $\Omega_2=[0.10, 0.17]\times[-0.01,0.01]$ 
are considered. The computational settings and results are summarized in Tables \ref{tbl:1} and \ref{tbl:2}, respectively. We refer to the abstraction-based method by ``abst'', and our approach based on interval analysis by ``intvl''. Denote by $N_q$ and $N_{\text{trans}}$ the number of abstract states and transitions, respectively. The ratio $W/\Omega$ indicates the coverage of $\Omega$ by the computed winning sets (from which the system can stay in $\Omega$) in terms of volume.

\begin{table}[htbp]
\small \caption{Comparison of two methods in case $\Omega_1$}
\label{tbl:1}
\centering
\begin{tabular}{c||c|c|c|c}
\hline
 & $N_q$ & $N_{\text{trans}}$ & $W/\Omega$ & Time(s)\\
\hline
abst ($\eta=0.001$) & 1881 & 145415 & $2.8\%$ & 41.089\\
\hline
abst ($\eta=0.004$) & 125 & 2912 & $0\%$ & -- \\
\hline
intvl ($\varepsilon=0.001$) & 156 & 2132 & $99.2\%$ & 1.886\\
\hline
\end{tabular}
\end{table}

\begin{table}[htbp]
\small \caption{Comparison of two methods in case $\Omega_2$}
\label{tbl:2}
\centering
\begin{tabular}{c||c|c|c|c}
\hline
 & $N_q$ & $N_{\text{trans}}$ & $W/\Omega$ & Time(s) \\
\hline
abst ($\eta=0.001$) & 1311 & 0 & $0\%$ & --\\
\hline
abst ($\eta=0.0005$) & 5421 & 4990 & $0\%$ & --\\
\hline
intvl ($\varepsilon=0.001$) & 187 & 2259 & $99.1\%$ & 2.312 \\
\hline
\end{tabular}
\end{table}

With the specification $\Omega_1$, the pendulum is to be controlled around the upright position. From Table \ref{tbl:1}, to obtain an nonempty winning set, one has to use a grid width as small as $0.001$ when applying an abstraction-based method. This will generate a large number of discrete states and transitions. With the same precision, our method yields less states and transitions but a considerably bigger winning set, which almost covers the entire set $\Omega$. The advantage of our method is even more clear for the specification $\Omega_2$, where the system has to be stabilized around a positive angle. Abstraction-based methods fail in control synthesis because the growth bound used gives a conservative estimation of state trajectories.

The simulation results shown in Figure \ref{fig:ipdlctl} indicate that the invariance specifications are achieved in both situations.

\begin{figure}[htbp]
  \centering
  \includegraphics[scale=0.65]{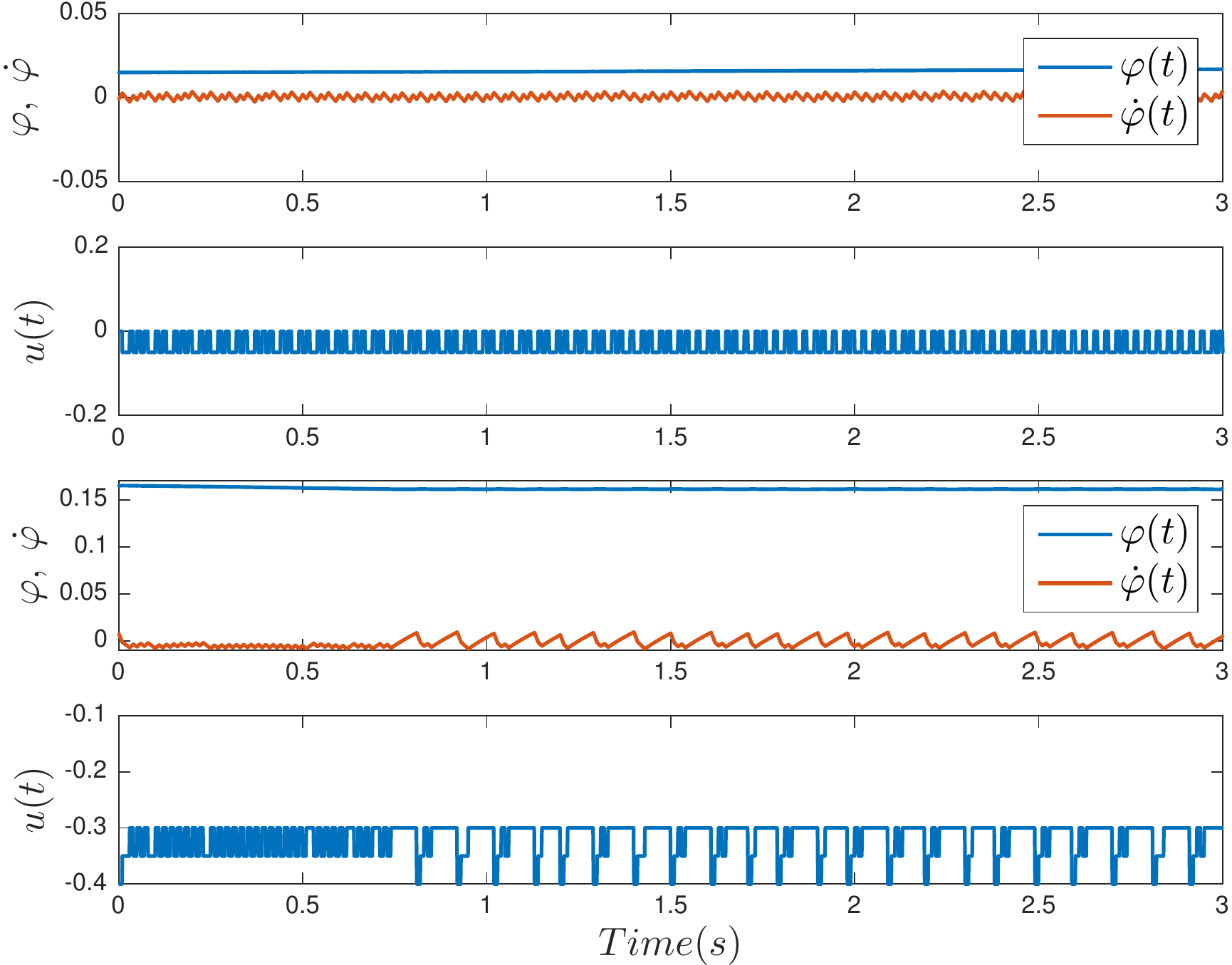}
  \caption{Closed-loop responses and the switching signals. The curves of $\varphi(t),\dot{\varphi}(t)$ and $u(t)$ on the top are the results for the first case $\Omega_1, x_0=[0.015, -0.001]$; the curves on the bottom are for the second case $\Omega_2, x_0=[0.165, 0.008]$. }
  \label{fig:ipdlctl}
\end{figure}

\subsection{Polynomial dynamical system}
Consider a nonlinear switched system with four modes of polynomial dynamics \cite{LiuOTM13}:
\begin{equation*}
  \begin{aligned}
    f_1(x)&=
    \begin{bmatrix}
      0.85x_1-0.1x_2-0.05x_1^3 \\
      x_2-0.1x_2^2+0.1x_1+0.2
    \end{bmatrix},\\[2\jot]
    f_2(x)&=
    \begin{bmatrix}
      0.85x_1-0.1x_2-0.05x_1^3 \\
      0.9x_2+0.1x_1
    \end{bmatrix},\\[2\jot]
    f_3(x)&=
    \begin{bmatrix}
      0.99x_1-0.02x_2-0.01x_1^3+0.04 \\
      x_2+0.02x_1+0.2
    \end{bmatrix},\\[2\jot]
    f_4(x)&=
    \begin{bmatrix}
      0.99x_1-0.02x_2-0.01x_1^3-0.03 \\
      x_2+0.02x_1-0.2
    \end{bmatrix}.
  \end{aligned}
\end{equation*}

The invariance control target set is given by $\Omega=[0.2, 3]\times[-2, -0.5]$, which only contains an unstable fixed point of mode 1 $[0.8952, -1.7015]$. Displayed in Figure \ref{fig:poly4} (a) by gray intervals, the inner approximation of the maximal controlled invariant set is obtained using Algorithm \ref{alg:innerc}. It shows that $\Omega$ is not controlled invariant itself. We apply the same control policy as in the previous two cases to design an invariance controller. The simulation result with initial condition $x_0=[0.208,-1.06]$, shown in Figure \ref{fig:poly4}, indicates that the system is controlled inside the target region as required.

\begin{figure}[htbp]
  \centering
  \begin{subfigure}[b]{0.5\textwidth}
    \centering
    \includegraphics[scale=0.65]{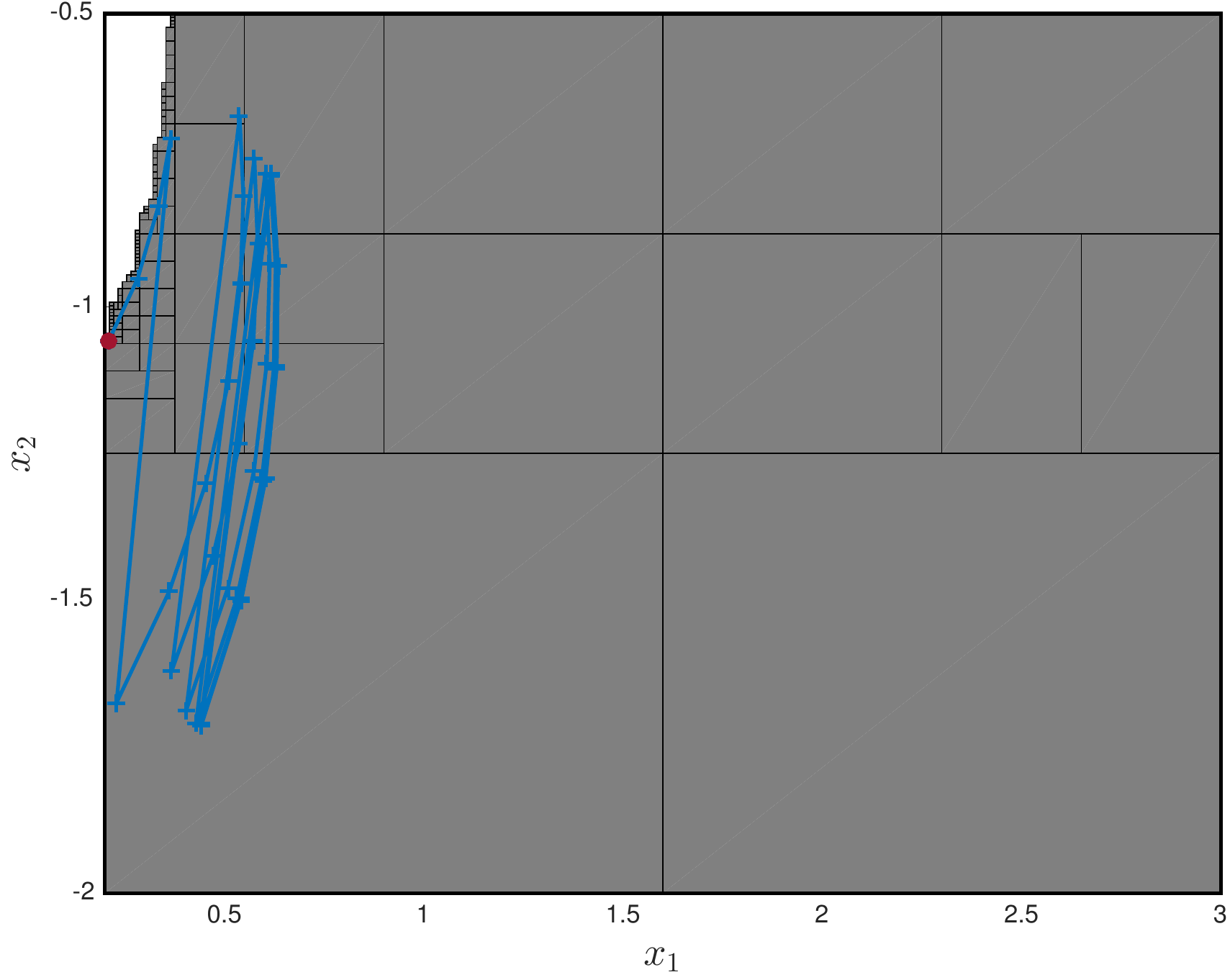}
    \caption{Closed-loop phase portrait.}
  \end{subfigure}
  \begin{subfigure}[b]{0.5\textwidth}
    \centering
    \includegraphics[scale=0.65]{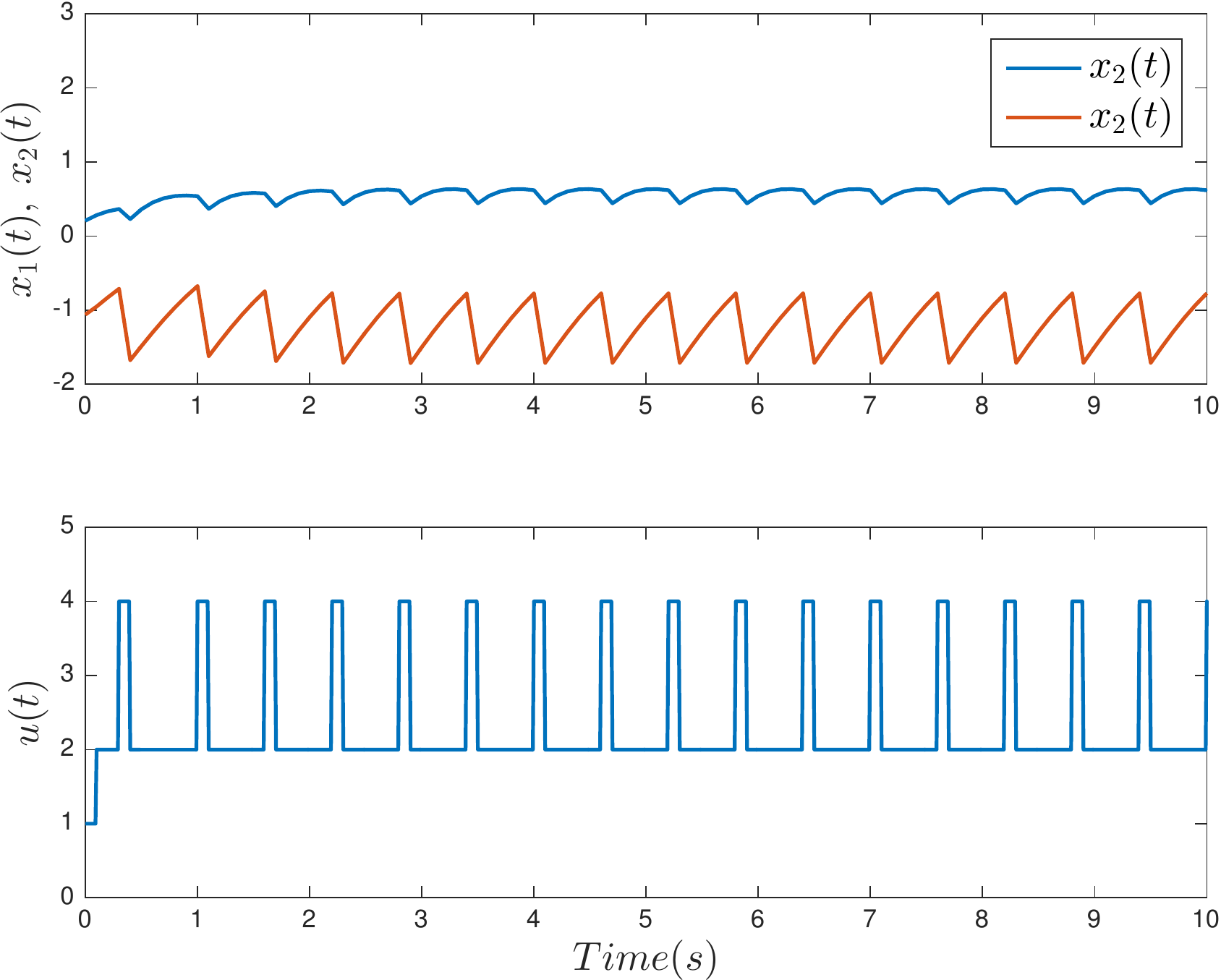}
    \caption{Time history of closed-loop states and control variables.}
  \end{subfigure}
  \caption{Invariance control results for switched polynomial system.}
  \label{fig:poly4}
\end{figure}

\section{Conclusion}
We have presented an interval analysis approach to the invariance control problem for discrete-time switched nonlinear systems. The approach does not assume that subsystems have asymptotically stable dynamics nor that they have common equilibrium points. The critical step is the computation of maximal controlled invariant sets. Using interval methods, we developed algorithms for computing outer and inner approximations of such invariant sets by unions of intervals. We proved that the maximal controlled invariant sets can be outer approximated in arbitrary precision. As a main result of this paper, we introduced a robustly controlled invariance condition for the finite determination of invariant inner approximations. This condition also implies the existence of a partition-based invariance controller, which can be extracted from invariant inner approximations of the maximal controlled invariant sets. Experimental studies showed that this approach is effective and efficient for switched systems of a moderate scale.
  
We demonstrated that interval methods are promising and efficient for solving low-dimensional formal control synthesis problems. To make interval methods applicable to higher-dimensional systems, our future research will focus on two aspects: 1) design efficient data structures that are suitable for parallel computing; 2) investigate model reduction techniques in the context of formal control synthesis. Extending this work on invariance control, future work will consider more general specifications and continuous-time dynamical systems.

\section*{Acknowledgment}
The research was supported in part by the Natural Sciences and Engineering Council of Canada.

\ifCLASSOPTIONcaptionsoff
  \newpage
\fi

\bibliographystyle{IEEEtranS}
\bibliography{../invctlref}

\end{document}